\renewcommand{\S}{\mathbb{S}}
\newcommand{\K}{\mathbb{K}}
\newcommand{\p}{\varphi}
\newcommand{\la}{\lambda}
\newcommand{\D}{\nabla}
\def\d{\partial}
\def\e{\varepsilon}
\def\nat{\rm nat}
\def\weak{\rightharpoonup}
\newcommand{\R}{\mathbb{R}}
\def\CC{\mathbb{C}}
\def\FF{\mathcal{F}}
\def\DD{\mathbb{D}}
\def\eps{\varepsilon}
\def\D{\nabla}
\def\XXint#1#2#3{{\setbox0=\hbox{$#1{#2#3}{\int}$}
     \vcenter{\hbox{$#2#3$}}\kern-.5\wd0}}
\newcounter{bei}
\renewcommand{\t}{\widetilde}
\renewcommand{\o}{\overline}
\numberwithin{equation}{section}
\theoremstyle{plain}
\newtheorem{theorem}{Theorem}[section]
\newtheorem{lemma}[theorem]{Lemma}
\newtheorem{proposition}[theorem]{Proposition}
\theoremstyle{definition}
\newtheorem{remark}[theorem]{Remark}
\newtheorem{example}[theorem]{Example}
\DeclareMathOperator{\Tr}{tr}
\DeclareMathOperator{\sym}{sym}
\DeclareMathOperator{\sgn}{sgn}
\author[L. Freddi]{Lorenzo Freddi}
\author[P. Hornung]{Peter Hornung}
\author[M.G. Mora]{Maria Giovanna Mora}
\author[R. Paroni]{Roberto Paroni}
\address[L. Freddi]{DIMA,
via delle Scienze 206, 33100 Udine, Italy
}\email{lorenzo.freddi@uniud.it}
\address[P. Hornung]{Fachrichtung Mathematik,
TU Dresden,
01062 Dresden,
Germany
}\email{peter.hornung@tu-dresden.de}
\address[M.G. Mora]{Dipartimento di Matematica, Universit\`a di Pavia, via Ferrata 1, 27100 Pavia, Italy}
\email{mariagiovanna.mora@unipv.it}
\address[R. Paroni]{DADU, Universit\`{a} degli Studi di
Sassari, Palazzo del Pou Salit,
07041 Alghero (SS), Italy} \email{paroni@uniss.it}
\begin{document}
\title[A variational model for anisotropic and naturally twisted ribbons]{A variational model for anisotropic \\ and naturally twisted ribbons}

\date{}

\maketitle

\begin{abstract}
We consider thin plates whose energy density is a quadratic function of the difference between the second fundamental form of the deformed configuration and a ``natural'' curvature tensor. This tensor either denotes the second fundamental form of the stress-free configuration, if it exists, or a target curvature tensor. In the latter case, residual stress arises from the geometrical frustration involved in the attempt to achieve the target curvature: as a result, the plate is naturally twisted, even in the absence of external forces or prescribed boundary conditions.
Here, starting from this kind of plate energies, we derive a new variational one-dimensional model for naturally twisted ribbons by means of $\Gamma$-convergence. Our result generalizes, and corrects, the classical Sadowsky energy to geometrically frustrated anisotropic ribbons with a narrow, possibly curved, reference configuration.
\end{abstract}

\section{Introduction}\label{Sec0}

Ribbons are ubiquitous in the physical world \cite{AT11, AEK11,CBK93, KTM12, PS10}. Recently, they have received a great deal of attention. This is true, in particular, for M\"obius strips and helical bands,
\cite{AgDeKo, BH2015, Ef2015, ESK11, FF16,  KF2015, SH2015, To2015}.
This renewed interest is also due to their manifold potential applications,
which range from physics/electro-technology to chemistry/nano-technology \cite{FKS12, He12, RP13, SB09, SH2007, TTO02, TTT05}.

Geometrically a ribbon is a strip of thickness $h$,
width $\e$, and centerline length $\ell$, with $h\ll\e\ll\ell$. Because of anisotropic pre-strains, inhomogeneous swelling, plastic deformations or differential growth, ribbons may not have a stress-free configuration.
Hyper-elastic theories for these bodies have been recently formulated in terms of deformations that are measured with respect to a reference metric rather than a reference configuration \cite{ESK09, ESK13}.

Several plate models for these materials have been obtained by studying the $\Gamma$-limit of various scalings of the energy, as $h$ goes to zero. In particular, in~\cite{FrJaMoMu, LP2011,S2007} the energy density of the deduced model  is a quadratic function of the difference between
the second fundamental form
of the deformed configuration and a ``natural'' curvature tensor. This  tensor either denotes
the second fundamental form of the natural (stress-free) configuration or a target curvature tensor.
In the latter case, residual stress arises from the geometrical frustration involved in the attempt to achieve the target curvature: as a result, the ribbon is naturally twisted, even in the absence of external forces or prescribed boundary conditions. By controlling the ``natural'' curvature tensor one may select the shape spontaneously attained by the ribbon: this is the focus of several studies aimed at designing new structures \cite{AgDeKo, KES2007, GMG14, SYU11, TV13, ZAL99}.

Given that also $\e\ll\ell$, after having let $h$ go to zero, it is interesting to find one-dimensional models that characterize very narrow strips, by considering the limit as $\e$ tends to zero.
A limit energy for  homogeneous, isotropic, elastic ribbons with a rectangular stress-free configuration  was put forward by
Sadowsky~\cite{Sadowsky1}, see~\cite{HF2015b} for a recent English translation. This energy, now known as the Sadowsky energy, depends on the curvature and torsion of the centerline of the band and it is singular at the points where the curvature vanishes. A formal justification of the Sadowsky energy was given by Wunderlich \cite{Wunder,To2015}. Only very recently, in \cite{FrHoMoPa}, it has been proved by means of  $\Gamma$-convergence that the Sadowsky energy is correct for  ``large'' curvature of the centerline of the strip, while for ``small'' curvature the correct limit energy  is significantly different from the Sadowsky energy. We shall further address this point at the end of the introduction.

Before discussing the contents of our paper we mention that one-dimensional models could be obtained from the three-dimensional theory also by letting $h$ and $\e$ go to zero simultaneously.
Within the non-linear theory of elasticity for homogeneous bodies with a stress-free configuration several limit energies, corresponding to different scalings, have been obtained in \cite{FrMoPa,FrMoPa2}.

In the geometrically frustrated setting, one-dimensional models have been formally deduced from two-dimensional models in~\cite{DA2015, KES2007, GMG14, SH2007}  by following the procedure of Wunderlich \cite{Wunder,To2015}.

In this paper we consider a two-dimensional energy that coincides with that obtained in~\cite{S2007} by letting $h$ go to zero (see also~\cite{FrJaMoMu, LP2011}) and the same problem considered in \cite{DA2015} but with more general symmetries. We assume the reference configuration to be given by a sequence of two-dimensional ``thin'' regions parametrized by $\e$. These regions are not necessarily rectangular, they may have a curved centerline and a smoothly varying width. The admissible deformations are isometries
and their energy depends quadratically on the difference between the second fundamental form of the deformed configuration and a ``natural'' curvature tensor. By letting the parameter $\eps$ go to zero, under appropriate assumptions on the limit behaviour of the ``natural'' curvature tensor, we identify the $\Gamma$-limit of the (suitably re-scaled) sequence of energy functionals in a topology that ensures compactness of the sequence of minimizers.

Our result not only provides a rigorous derivation of the energy of a very narrow ribbon, but also corrects several formal justifications that are found in the literature. In addition, we allow the energy density to be anisotropic: an intrinsic anisotropy and not simply the one scattered
by the presence of  the ``natural'' curvature tensor as in \cite{Ch2014,ChMaSrHa2012, GiMa2012}.
Limit models within this generality, as far as we know, have not been deduced, not even formally.
We also prove a relaxation result for quadratic functionals with a determinant constraint (see Section \ref{sec_relax}), that is of interest in its own right
and is a fundamental ingredient to deal with the nonlinear isometry constraint in weak topologies.

The limit energy that we deduce depends on three vector fields (directors) $d_1$, $d_2$, and $d_3$,  where $d_1$ is tangent to the limit deformation, $d_2$ represents the ``transversal'' orientation of the strip, and $d_3$ is orthogonal to $d_1$ and $d_2$. The system of directors may not be orthonormal; in fact, they are related to the geometry
of the reference configuration  by means of a covariant basis $D=(D_1,D_2)$ through the constraints
$$
d_\alpha\cdot d_\beta=D_\alpha\cdot D_\beta ,\qquad
d_1'\cdot(d_3\wedge d_1)=D_1'\cdot(e_3\wedge D_1).
$$
The first constraint implies that the ribbon is unsherable and inextensible, while the second constraint is a consequence of the intrinsic nature of the geodesic curvature.
The energy functional is then given by
$$
J(d_1,d_2,d_3)=\int_{-\ell/2}^{\ell/2} \o Q(x_1,d_1'\cdot d_3, d_2'\cdot d_3)\,ds,
$$
where $\ell$ is the length of the centerline of the strip. The quantities $d_1'\cdot d_3$ and $d_2'\cdot d_3$ are usually called, within the theory of rods, bending and twisting, respectively. Denoting the energy density of the plate by $Q$, the limit energy density $\o Q$ is defined in two steps:
first, two positive constants $\alpha_\K^+$ and $\alpha_\K^-$ are defined by
$$
\alpha_\K^\pm:=\sup\{ \alpha>0: \ Q(M)\pm\alpha\det M\geq0 \text{ for every } M\in \R^{2\times 2}_{\sym}\},
$$
and then the energy density $\overline Q$ is given by
\begin{align*}
\overline Q(x_1,\mu,\tau) : = \min \Big\{  \big(Q(M-& D^{-T}A^{\circ}D^{-1})+\alpha^+_\K{(\det M)^+}+\alpha^-_\K {(\det M)^-}\ \big)\det D:\\
&M=\mu D^1\otimes D^1+\tau(D^1\otimes D^2+D^2\otimes D^1)+\gamma D^2\otimes D^2, \ \gamma\in\R
\Big\},
\end{align*}
where $(D^1,D^2)$ denote the contravariant basis in the reference configuration, i.e.,
$D^\alpha\cdot D_\beta=\delta_{\alpha\beta}$, while $A^\circ=A^\circ(x_1)$ characterizes the limit behaviour of the ``natural'' curvature tensor, and $(\det M)^\pm$ denote the positive and negative part of $\det M$.

In the very particular case considered by Sadowsky~\cite{Sadowsky1, HF2015b} and Wunderlich \cite{Wunder,To2015}, which corresponds to $Q(M)=|M|^2$, $A^\circ=0$, and $D$ equal to the identity, the energy density reduces to
$$
\overline Q(x_1,\mu,\tau)  =
\left\{
\begin{array}{ll}
 \displaystyle \frac{(\mu^2+\tau^2)^2}{\mu^2} &
  \mbox{if }  \mu^2 >  \tau^2,\\[2ex]
 4\tau^2 & \mbox{if }  \mu^2 \le  \tau^2,
\end{array}
\right.
$$
and coincides with that found in \cite{FrHoMoPa}.
If $\mu$ and $\tau$ are interpreted as the curvature and the torsion of the centerline of the band,
this function agrees with the Sadowsky energy density only in the regime $\mu^2>\tau^2$; this is the ``large'' curvature regime to which we alluded earlier in
the introduction.

The paper is organized as follows. In Section \ref{sec_energy} we introduce the sequence of energy
functionals and in Section~\ref{sec_rescaling} we rescale them on a fixed domain.
In Section~\ref{sec_comp} we study the compactness properties of sequences with bounded
energy and state the $\Gamma$-convergence result. Section~\ref{sec_relax} is devoted to the relaxation of quadratic
functionals with a constraint on the determinant. This result is the crucial ingredient for the identification of the correct $\Gamma$-limit and is used in the proof of both the liminf and the limsup inequality. The construction of the recovery sequence also requires several geometric and approximation results for isometric immersions, that are proved in Section~\ref{Pro}. Finally,
in Section~\ref{proofs} we prove the $\Gamma$-convergence result.

\section{The energy of an inextensible elastic ribbon}\label{sec_energy}

We consider an inextensible elastic ribbon whose configurations in the three-dimensional space are isometric to a planar region $S_\eps$, where  $\eps>0$ is a small parameter. The region $S_\eps\subset \R^2$ will be taken as reference configuration and its geometry will be specified below.
Any smooth deformation $u:S_\eps\to\R^3$ will satisfy the isometry constraint
\begin{equation}
  \label{isou}
(\D u)^T(\D u) = I,
\end{equation}
where $I$ denotes the $2\times 2$ identity matrix. In coordinates,
\eqref{isou} reads
$\partial_\alpha u\cdot\partial_\beta u = \delta_{\alpha\beta}$.
We denote by
$$
\nu_u = \d_1 u\wedge\d_2 u
$$
the unit normal to $u$, and by
$A_u : S_\e\to\R^{2\times 2}_{\sym}$
the second fundamental form of $u$. It is defined by
 $(A_u)_{\alpha\beta} := \nu_u\cdot\d_\alpha\d_\beta u$ or, equivalently, by $A_u:=(\nabla^2u_i)(\nu_u)_i$.

We assume the energy density of the strip to be quadratic and to depend on the second fundamental form,
but we neither assume the material to be isotropic nor the reference configuration to be  stress free.
Let $A^{\nat}_\eps\in L^2(S_\eps;\R^{2\times 2}_{\rm sym})$ be a symmetric tensor field that either represents the second fundamental form of a ``natural'' configuration or a target curvature tensor field not necessarily corresponding to a configuration (this latter case is usually addressed as non-Euclidean ribbons).
The bending rigidity is taken into account by a linear map $\K$ from
$\R^{2\times2}_{\rm sym}$ into itself. We assume $\K$ to be symmetric,
i.e., $\K A\cdot B=\K B \cdot A$ for every $A, B\in \R^{2\times 2}_{\rm sym}$. Moreover, we assume $\K$ to be positive definite, i.e.,
there exists a  constant $c>0$ such that
$\K A\cdot A\ge c|A|^2$
for every $A\in \R^{2\times 2}_{\rm sym}$.

The energy of the ribbon takes the form
$$
E_{\e}(u) = \frac{1}{2\e}\int_{S_{\e}} \K (A_u(x)-A^{\nat}_\eps(x))\cdot(A_u(x)-A^{\nat}_\eps(x))\, dx.
$$
Its domain of definition is the set of deformations
$u\in W^{2,2}(S_\e;\R^3)$ that satisfy the
constraint~\eqref{isou}.

\subsection*{The region $\boldsymbol{S_\eps}$}

To define the region $S_\eps$ we introduce the rectangle
$\Omega_{\e} = I\times (-\e/2, \e/2)$, where
$I$ denotes the interval $(-\ell/2, \ell/2)$ with $\ell>0$.
Then
$$
S_\eps=\chi(\Omega_\eps)
$$
where $\chi:\R^2\to\R^2$  is an injective orientation preserving map of class $C^2$. We assume  that
$$
|\partial_1\chi|(x_1,0)=1\quad \forall\,x_1\in\R,
$$
so that the length of the curve $\chi(\{x_2=0\})$ in $S_\eps$  is also equal to $\ell$.

Set $\Omega := I\times (-1/2, 1/2)$ and let $\rho_\eps:\Omega\to\Omega_\eps$ be defined by $\rho_\eps(x):=(x_1,\e x_2)$.
We define
$$
D^\e:=(\nabla \chi)\circ \rho_\e,
$$
and
$$
D^\e_\alpha:=D^\e e_\alpha=(\partial_\alpha\chi)\circ \rho_\e.
$$
The pair of vectors $D^\e_1$ and $D^\e_2$ is the covariant basis in the
reference configuration.

For later use we note that there exists a constant $c>0$ such that
\begin{equation}\label{boundsDe}
c\le\det D^\e(x)\le \frac{1}{c},\quad c\le|D^\e(x)| \le \frac{1}{c}\quad \mbox{ for every }x\in\Omega,
\end{equation}
and that
\begin{equation}\label{dedu}
D^\e\to \nabla\chi(\cdot,0)=:D
\end{equation}
uniformly. We set $D_\alpha:=D e_\alpha$ and remark that $|D_1|=1$.

\section{The rescaled bending energy}\label{sec_rescaling}

Let $\chi_\e:\Omega\to S_\e$ be the function
$$
\chi_\e:=\chi\circ\rho_\e
$$
that maps the fixed rectangular region into the reference configuration.

\vspace{1ex}

\begin{center}
  \includegraphics[width=8cm]{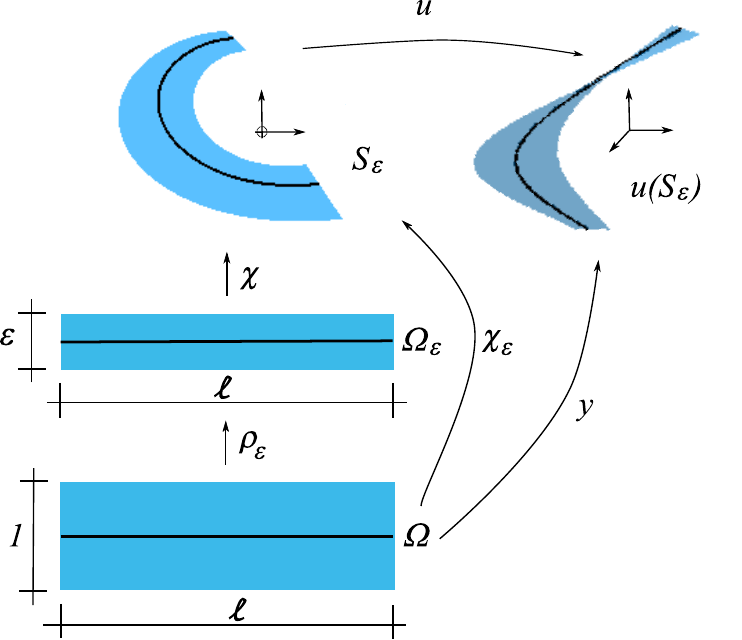}
\end{center}


\vspace{1ex}

Setting
$$
R^\e:=\nabla\rho_\e=e_1\otimes e_1+\e e_2\otimes e_2,
$$
we have
$
\nabla\chi_\e=D^\e R^\e.
$
With a given deformation $u:S_\e\to\R^3$ we associate a rescaled deformation $y: \Omega\to\R^3$ by setting
$$
y:= u\circ\chi_\e.
$$
Then $\nabla y=(\nabla u)\circ \chi_\e\nabla\chi_\e$,
which can be rewritten in terms of the directors of the reference configuration as
\begin{equation}\label{dyu}
\partial_1 y=(\nabla u)\circ \chi_\e D_1^\e,\qquad
\frac{\partial_2 y}{\e}=(\nabla u)\circ \chi_\e D_2^\e.
\end{equation}
As $u$ satisfies \eqref{isou}, we immediately deduce that
\begin{eqnarray}\label{ISO}
&\partial_1 y\cdot\partial_1 y=D^\e_1\cdot D^\e_1,&\nonumber\\
&\displaystyle\partial_1 y\cdot\frac{\partial_2 y}{\e}=D^\e_1\cdot D^\e_2,&\\
&\displaystyle\frac{\partial_2 y}{\e}\cdot\frac{\partial_2 y}{\e}=D^\e_2\cdot D^\e_2.&\nonumber
\end{eqnarray}
Thus, if  $u\in W^{2,2}(S_\e;\R^3)$ satisfies \eqref{isou},
then the rescaled deformation $y$ belongs to the space
$$
W^{2,2}_{{\rm iso},\eps}(\Omega;\R^3):= \{ y\in W^{2,2}(\Omega;\R^3): y \mbox{ satisfies }\eqref{ISO} \text{ a.e.\ in } \Omega\}.
$$

Let
$$
n_y:=\nu_u\circ\chi_\e=\frac{\partial_1 y\wedge\e^{-1}\partial_2 y}{|\partial_1 y\wedge\e^{-1}\partial_2 y |}
$$
denote the unit normal to $y$. The second fundamental forms of $u$
and $y$ are related by
$$
\nabla^2y_i (n_{y})_i=\nabla\chi_\e^T A_u\circ \chi_\e \nabla\chi_\e.
$$
From this identity we deduce
\begin{equation}
  \label{AyAu}
A_u\circ\chi_\e= (D^\e)^{-T} A_{y,\e}(D^\e)^{-1}
\end{equation}
where
$$
A_{y,\e}:=(R^\e)^{-1}\nabla^2y_i(n_y)_i (R^\e)^{-1}
$$
is the rescaled second fundamental form of $y$. This can be rewritten in a more explicit form as
$$
A_{y,\e} =
n_{y}\cdot\d_1\d_1 y \,e_1\otimes e_1 +n_{y}\cdot\frac{\d_1\d_2 y}{\e}
(e_1\otimes e_2+e_2\otimes e_1)+
n_{y}\cdot\frac{\d_2\d_2 y}{\e^2} e_2\otimes e_2.
$$
The energy in terms of the rescaled deformation is given by
$J_\e:W^{2,2}_{{\rm iso},\eps}(\Omega;\R^3)\to [0,+\infty)$, defined as
$$
J_\e(y)=\frac{1}{2}\int_{\Omega} \K\, (D^\e)^{-T}(A_{y,\eps}-A^\circ_\eps)(D^\e)^{-1}\cdot(D^\e)^{-T}(A_{y,\eps}-A^{\circ}_\eps)(D^\e)^{-1}
\det D^\e\, dx,
$$
where we have set
\begin{equation}\label{Ao}
A^\circ_\e:=(D^\e)^T A^{\nat}_\e\circ \chi_\e D^\e.
\end{equation}
We note that the relation between the bending energy and the rescaled energy is $J_\e(y)=E_\eps(u)$.

\section{Compactness and $\Gamma$-limit}\label{sec_comp}

Hereafter, we assume that
\begin{equation}\label{wconvAo}
A^\circ_\e \to A^\circ\qquad \mbox{in }L^2(\Omega; \R^{2\times 2}),
\end{equation}
with $A^\circ=A^\circ(x_1)$, that is $A^\circ\in  L^{2}(I; \R^{2\times 2}_{\sym})$.

\begin{lemma}\label{compactness}
Let $(y_{\e})\subset W^{2,2}_{{\rm iso},\e}(S;\R^3)$ be a sequence of scaled isometries such that
\begin{equation}
\label{cp}
\limsup_{\e\to 0} J_{\e}(y_{\e}) < \infty.
\end{equation}
Then, up to a subsequence and additive constants, there exist a deformation $y\in W^{2,2}(I;\R^3)$ and
three vector fields $d_1,d_2\in W^{1, 2}(I;\R^3)$ and $d_3:=(d_1\wedge d_2)/|d_1\wedge d_2|$
satisfying
\begin{eqnarray}
\label{constraints}
&d_1 = y', \quad d_\alpha\cdot d_\beta=D_\alpha\cdot D_\beta ,& \\
\label{constraints-2}
&d_1'\cdot(d_3\wedge d_1)=D_1'\cdot(e_3\wedge D_1),\label{geodesic} &
\end{eqnarray}
almost everywhere in $I$,
such that
\begin{equation}
\label{conv}
y_{\e}\weak y \mbox{ in }W^{2,2}(\Omega; \R^3),\quad
\partial_1 y_{\e} \weak d_1\mbox{ in }W^{1,2}(\Omega; \R^{3}),
\quad
\frac{\partial_2 y_{\e}}{\e} \weak d_2\mbox{ in }W^{1,2}(\Omega; \R^{3}),
\end{equation}
and $A_{y_{\e},\e} \weak A$ in $L^{2}(\Omega; \R^{2\times 2}_{\sym})$, where
\begin{equation}
\label{conv_second}
A=d_1'\cdot d_3\, e_1\otimes e_1 +d_2'\cdot d_3\,
(e_1\otimes e_2+e_2\otimes e_1)+
\gamma\, e_2\otimes e_2,
\end{equation}
with $\gamma\in L^2(\Omega)$.
\end{lemma}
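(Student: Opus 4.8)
The plan is to extract the limits by a compactness argument resting on a uniform $W^{2,2}$ bound for $y_\e$, and then to pass to the limit in the algebraic relations encoding the metric and the geodesic curvature of the centerline. \emph{A priori estimates.} Since $\K$ is positive definite and $\det D^\e\ge c$ by \eqref{boundsDe}, the bound \eqref{cp} together with the convergence \eqref{wconvAo} of $A^\circ_\e$ gives $\|A_{y_\e,\e}\|_{L^2(\Omega)}\le C$. From \eqref{ISO} and \eqref{boundsDe} one has $|\partial_1 y_\e|=|D^\e_1|$ bounded in $L^\infty$ and $|\partial_2 y_\e|=\e|D^\e_2|=O(\e)$ in $L^\infty$. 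For the Hessian I would use the Gauss decomposition
\[
\partial_\alpha\partial_\beta y_\e=\Gamma^\gamma_{\alpha\beta}\,\partial_\gamma y_\e+\bigl(n_{y_\e}\cdot\partial_\alpha\partial_\beta y_\e\bigr)\,n_{y_\e},
\]
valid a.e.\ for $W^{2,2}$ maps obeying \eqref{ISO} (one differentiates \eqref{ISO}, which is licit since $\partial_\alpha y_\e\in W^{1,2}\cap L^\infty$), with $\Gamma$ the Christoffel symbols of the smooth metric $g^\e=(\nabla\chi_\e)^T\nabla\chi_\e$ (which equals $(\nabla y_\e)^T\nabla y_\e$ by \eqref{ISO}) and the normal components $n_{y_\e}\cdot\partial_\alpha\partial_\beta y_\e$ equal to $(A_{y_\e,\e})_{11}$, $\e(A_{y_\e,\e})_{12}$, $\e^2(A_{y_\e,\e})_{22}$. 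Some entries of $(g^\e)^{-1}$ and some $\Gamma^\gamma_{\alpha\beta}$ blow up like $\e^{-1}$ or $\e^{-2}$, but they always multiply the correspondingly small quantity $\partial_2 y_\e=O(\e)$; keeping track of the powers of $\e$ (using $\chi\in C^2$) one obtains that $\partial_1 y_\e$ and $\e^{-1}\partial_2 y_\e$ are bounded in $W^{1,2}(\Omega;\R^3)$, that $\partial_1\partial_2 y_\e$ and $\partial_2\partial_2 y_\e$ are of order $\e$ and $\e^2$ in $L^2(\Omega)$, and hence, after subtracting its mean, that $y_\e$ is bounded in $W^{2,2}(\Omega;\R^3)$.

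\emph{Compactness and the constraints \eqref{constraints}.} Passing to a subsequence, $y_\e\weak y$ in $W^{2,2}$, $\partial_1 y_\e\weak d_1$ and $\e^{-1}\partial_2 y_\e\weak d_2$ in $W^{1,2}$, and $A_{y_\e,\e}\weak A$ in $L^2$; by Rellich the first three convergences are strong in $W^{1,2}$, $L^2$, $L^2$ and hold a.e. Then $d_1=\partial_1 y$, and since $\partial_2\partial_1 y_\e$, $\partial_2(\e^{-1}\partial_2 y_\e)$ and $\partial_2 y_\e$ all tend to $0$ in $L^2$, the fields $y,d_1,d_2$ are independent of $x_2$, so $y\in W^{2,2}(I;\R^3)$, $d_1,d_2\in W^{1,2}(I;\R^3)$ and $d_1=y'$, which proves \eqref{conv}. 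Passing to the limit in \eqref{ISO} — the left-hand sides converge strongly in $L^1$ by the strong $L^2$ convergences, the right-hand sides uniformly by \eqref{dedu} — yields $d_\alpha\cdot d_\beta=D_\alpha\cdot D_\beta$ a.e.; since $\det D\ge c$ by \eqref{boundsDe}, \eqref{dedu}, the vectors $d_1,d_2$ are linearly independent a.e., so $d_3:=(d_1\wedge d_2)/|d_1\wedge d_2|$ is well defined (with $|d_1\wedge d_2|=\det D$), lies in $W^{1,2}(I;\R^3)$, and $n_{y_\e}\to d_3$ a.e.; as $|n_{y_\e}|\equiv 1$, also $n_{y_\e}\to d_3$ in $L^p(\Omega)$ for every $p<\infty$.

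\emph{Identification of $A$ and of the geodesic curvature \eqref{geodesic}.} Combining $n_{y_\e}\to d_3$ strongly with $\partial_1\partial_1 y_\e\weak d_1'$ and $\partial_1(\e^{-1}\partial_2 y_\e)\weak d_2'$ weakly in $L^2$, the entries of $A_{y_\e,\e}$ pass to the limit to give $A_{11}=d_1'\cdot d_3$, $A_{12}=A_{21}=d_2'\cdot d_3$ and $A_{22}=:\gamma\in L^2(\Omega)$, that is \eqref{conv_second}. For \eqref{geodesic}, contracting the Gauss decomposition with $n_{y_\e}\wedge\partial_1 y_\e$ kills the normal term and the $\partial_1 y_\e$ term, so
\[
\partial_1\partial_1 y_\e\cdot(n_{y_\e}\wedge\partial_1 y_\e)=\Gamma^2_{11}\,\partial_2 y_\e\cdot(n_{y_\e}\wedge\partial_1 y_\e)=\Gamma^2_{11}\sqrt{\det g^\e},
\]
an expression depending only on $g^\e$; the identical computation for the planar immersion $\chi_\e$, whose second fundamental form vanishes, gives the same quantity, whence $\partial_1\partial_1 y_\e\cdot(n_{y_\e}\wedge\partial_1 y_\e)=\partial_1 D^\e_1\cdot(e_3\wedge D^\e_1)$ a.e.\ in $\Omega$. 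The right-hand side converges uniformly to $D_1'\cdot(e_3\wedge D_1)$ since $\chi\in C^2$; on the left, $n_{y_\e}\wedge\partial_1 y_\e\to d_3\wedge d_1$ strongly in $L^2$ while $\partial_1\partial_1 y_\e\weak d_1'$ weakly in $L^2$, so the left-hand side tends to $d_1'\cdot(d_3\wedge d_1)$ in the sense of distributions; this is \eqref{geodesic}.

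\emph{Main obstacle.} The genuinely delicate points are the $\e$-bookkeeping in the a priori estimate and, above all, the geodesic-curvature identity, which hinges on the validity of the Gauss decomposition at $W^{2,2}$ regularity and on the intrinsic (isometry-invariant) nature of the geodesic curvature of the centerline; the remaining compactness and limit passages are routine.
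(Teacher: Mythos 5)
Your proof is correct and follows essentially the same route as the paper's: an $L^2$ bound on $A_{y_\e,\e}$ from coercivity of $\K$ and \eqref{boundsDe}, a decomposition of the Hessian of $y_\e$ into tangential and normal parts to obtain $W^{2,2}$ bounds, passage to the limit in \eqref{ISO}, and finally the observation that the geodesic curvature of the centerline is an isometry invariant. The only stylistic difference is that you phrase the Hessian decomposition in Riemannian terms (Christoffel symbols of $g^\e$) whereas the paper works with the explicit dual frame $d^1_\e,d^2_\e,n_{y_\e}$ and substitutes the isometry constraint \eqref{ISO} directly; these are the same computation, and your $\e$-bookkeeping ($\Gamma^2_{11}=O(\e^{-1})$, $\Gamma^2_{22}=O(\e)$, etc.\ versus $\partial_2 y_\e=O(\e)$) matches the paper's explicit estimates. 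For \eqref{geodesic} you derive the intrinsic formula $\partial_1\partial_1 y_\e\cdot(n_{y_\e}\wedge\partial_1 y_\e)=\Gamma^2_{11}\sqrt{\det g^\e}$ from the Gauss decomposition, while the paper simply invokes the isometry invariance of geodesic curvature (citing Spivak) and writes the normalized version with the factor $|\partial_1 y_\e|^{-3}$; since $|\partial_1 y_\e|=|\partial_1\chi_\e|$ a.e., the two identities are equivalent and both pass to the limit by strong $\times$ weak convergence in $L^2$. No gaps.
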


\begin{remark}
Lemma \ref{compactness} naturally extends to a more intrinsic
setting, where the deformation $v := u\circ\chi$
is considered as the natural variable and the energy is defined on the class
of isometric immersions of the surface $\Omega_{\e}$ endowed with a given Riemannian metric
$g$ (which in the present case coincides with $(\D\chi)^T(\D\chi)$). In this setting
formulae \eqref{constraints} and \eqref{constraints-2} follow from the continuity of $g$ and
of the metric connection (Christoffel symbols) defined by $g$.
A similar remark applies to Theorem~\ref{Gamma}--(i) below.
Details on this general approach will be given in the forthcoming paper~\cite{FrHoMoPa2}.
\end{remark}

\begin{proof}[Proof of Lemma~\ref{compactness}.]
Let $(y_{\e})\subset W^{2,2}_{{\rm iso}, \e}(S;\R^3)$ be a sequence satisfying
\eqref{cp}. Then, by using the fact that $\K$ is positive definite and \eqref{boundsDe}, we find
\begin{align*}
C &> c\int_{\Omega} |(D^\e)^{-T}(A_{y_\e,\e}-A^\circ_\eps)(D^\e)^{-1}|^2
\det D^\e\, dx\\
&\ge c \int_{\Omega} |A_{y_\e,\e}-A^\circ_\eps|^2/|D^\e|^4\, dx\ge c \int_{\Omega} |A_{y_\e,\e}-A^\circ_\eps|^2\, dx,
\end{align*}
where the second inequality holds since there exists a constant $c>0$ such that $|BAC|\ge c |A|/(|B^{-1}|\, |C^{-1}|)$ for every matrix $A$ and any invertible matrices $B$ and $C$.
Thus, from \eqref{wconvAo} it follows that
\begin{equation}\label{BoundAye}
\limsup_{\e\to 0} \|A_{y_\e,\e}\|_{L^2(\Omega)} <+\infty.
\end{equation}
Also, combining the fact that $y_{\e}\in W^{2,2}_{{\rm iso}, \e}(\Omega;\R^3)$ with \eqref{boundsDe} gives the bound
\begin{equation}\label{l211}
\limsup_{\e\to 0} ( \|\d_1 y_\e\|_{L^\infty(\Omega)}+\|\e^{-1}\d_2 y_\e\|_{L^\infty(\Omega)})<+\infty.
\end{equation}

We now show that
\begin{equation}\label{l210}
\limsup_{\e\to 0} \big(\|\d_1\d_1 y_{\e}\|_{L^2(\Omega)} + \|\e^{-1}\d_1\d_2 y_{\e}\|_{L^2(\Omega)} + \|\e^{-2}\d_2\d_2 y_{\e}\|_{L^2(\Omega)} \big) < +\infty.
\end{equation}
To prove this it is convenient to set
$$
d_1^\e=\d_1 y_\e, \quad d_2^\e= \frac{\d_2 y_\e}{\e},\quad
d_\e^1=- \frac{n_{y_\e}\wedge d_2^\e}{|d_1^\e\wedge d_2^\e|},\quad 
d_\e^2= \frac{n_{y_\e}\wedge d_1^\e}{|d_1^\e\wedge d_2^\e|}.
$$
Since $d_\alpha^\e=(\nabla u)\circ \chi_\e D_\alpha^\e$, see \eqref{dyu}, and $u$ is an isometry, we have that
$|d_1^\e\wedge d_2^\e|=|D_1^\e\wedge D_2^\e|$. Thus, from \eqref{boundsDe} and
\eqref{l211} we deduce that
$$
\limsup_{\e\to 0} ( \|d_1^\e\|_{L^\infty(\Omega)}+\|d_2^\e\|_{L^\infty(\Omega)}+\|d^1_\e\|_{L^\infty(\Omega)}+\|d^2_\e\|_{L^\infty(\Omega)})<+\infty.
$$
Moreover, since $d_\alpha^\e\cdot d^\beta_\e=\delta_{\alpha\beta}$, we have
\begin{align}
\d_1\d_1 y_{\e}&=(\d_1\d_1 y_{\e}\cdot \d_1 y_\e) d^1_\e+(\d_1\d_1 y_{\e}\cdot \e^{-1}\d_2 y_\e) d^2_\e+(\d_1\d_1 y_{\e}\cdot n_{y_\e}) n_{y_\e}\nonumber\\
&=(\d_1D_1^\e\cdot D_1^\e) d^1_\e+[\d_1(D_1^{\e}\cdot D_2^{\e})- (2\e)^{-1}\d_2(D_1^{\e}\cdot D_1^{\e})] d^2_\e+(e_1\cdot A_{y_\e,\e}e_1) n_{y_\e},\label{partialy11}
\end{align}
where the second equality follows from \eqref{ISO} and the definition of $A_{y_\e,\e}$.
Since
$$
\e^{-1}\d_2(D_1^{\e}\cdot D_1^{\e})=\e^{-1}\d_2[(\d_1\chi\cdot \d_1\chi)\circ \rho_\e]=[\d_2(\d_1\chi\cdot \d_1\chi)]\circ \rho_\e,
$$
it follows that the first two terms on the right-hand side of \eqref{partialy11} are uniformly bounded in $L^\infty$, while the third is bounded in $L^2$ by \eqref{BoundAye}.
We have therefore proved that $\limsup_{\e\to 0} \|\d_1\d_1 y_{\e}\|_{L^2(\Omega)}<+\infty$.
The other two bounds appearing in \eqref{l210} are proven similarly.

From \eqref{l211} and \eqref{l210} we infer that, up to additive constants, the sequence $(y_\e)$ is uniformly bounded in $W^{2,2}(\Omega;\R^3)$. Therefore, up to subsequences,
we have that $y_\e\weak y$ in $W^{2,2}(\Omega;\R^3)$ and strongly in $W^{1,p}(\Omega;\R^3)$ for every $p<\infty$.
Inequality \eqref{l211} imply that $y$ is independent of $x_2$. The convergence just stated also implies that $\d_1 y_\e\weak d_1$
weakly in $W^{1,2}(\Omega;\R^3)$ and strongly in $L^p(\Omega;\R^3)$ for every $p<\infty$, with $d_1$ independent of $x_2$ and $d_1=y'$ almost everywhere in $I$.

Still from \eqref{l211} and \eqref{l210} we deduce that, up to subsequences, $\e^{-1}\d_2 y_\e\weak d_2$
weakly in $W^{1,2}(\Omega;\R^3)$ and strongly in $L^p(\Omega;\R^3)$ for every $p<\infty$, with $d_2$ independent of $x_2$. Now, by passing to the limit in \eqref{ISO} we find $d_\alpha\cdot d_\beta=D_\alpha\cdot D_\beta$.

Since
$$
n_{y_\e}= \frac{\d_1 y_\e\wedge \e^{-1}\d_2 y_\e}{|\d_1 y_\e\wedge \e^{-1}\d_2 y_\e|}= \frac{\d_1 y_\e\wedge \e^{-1}\d_2 y_\e}{|D_1^\e\wedge D_2^\e|}
$$ we have that $n_{y_\e,\e}\to d_3$ in $L^p(S;\R^3)$ for every $p<\infty$,
where $d_3=(d_1\wedge d_2)/|d_1\wedge d_2|$.

The constraint \eqref{geodesic} follows from the fact that the geodesic curvature is intrinsic, i.e., the geodesic curvatures of two isometric curves are equal, see \cite{Spivak}, that is
$$
\frac{\d_1\d_1y_\e\cdot (n_{y_\e}\wedge\d_1y_\e)}{|\d_1y_\e|^3}=
\frac{\d_1\d_1\chi_\e\cdot (e_3\wedge\d_1\chi_\e)}{|\d_1\chi_\e|^3}.
$$
Rearranging and passing to the limit we find
$$
\d_1 d_1\cdot (d_3\wedge d_1)=
\frac{\d_1D_1\cdot (e_3\wedge D_1)}{|D_1|^3}|d_1|^3,
$$
and the equality \eqref{geodesic} follows since $|D_1|=|d_1|=1$.

Finally, up to subsequences, we have that $A_{y_\e,\e}$ weakly  converges to a matrix field $A$ in $L^2(S;\R^{2\times2}_{\sym})$. By using the convergences established above,  it follows that
$e_1\cdot Ae_1=y''\cdot d_3$ and
$e_1\cdot Ae_2 =d_2'\cdot d_3$. The entry $e_2\cdot Ae_2$ cannot be identified in terms of
$y$, $d_2$, and $d_3$ and is set equal to $\gamma$ in the statement.
\end{proof}

The vector fields $d_1$, $d_2,$ and $d_3$ are usually called directors: $d_1$ is tangent to the deformation $y$, $d_2$ represents the ``transversal'' orientation of the strip, and $d_3$ is orthogonal to $d_1$ and $d_2$.
The limiting values of the $11$ and $12$ components of the second fundamental form are measures of flexure and twist, respectively, cf.\ \cite{Antman}.  We also note that the constraint $\det A_{y_{\e},\e}=0$, which holds for every $\e$, does not pass to the limit. Indeed, the limit matrix field $A$ in \eqref{conv_second} may have determinant different from zero. The constraint in \eqref{constraints} asserts that the limiting beam is inextensible, while \eqref{geodesic} asserts that the limiting beam has the same geodesic curvature of the reference.

In order to state the $\Gamma$-convergence result we first introduce some definitions. We set
\begin{multline*}
\mathcal{A}:=\Big\{(d_1,d_2,d_3)\in W^{1, 2}(I; \R^{3\times3}): \  d_\alpha\cdot d_\beta=D_\alpha\cdot D_\beta, \ d_3=\frac{d_1\wedge d_2}{|d_1\wedge d_2|},
\\
\mbox{ and }d_1'\cdot (d_3\wedge d_1) = D_1'\cdot(e_3\wedge D_1) \mbox{ a.e.\ in }I\Big\},
\end{multline*}
and
$$
Q(M):=\frac12\K M\cdot M.
$$
By means of this quadratic energy density we define the constants
$$
\alpha_\K^+:=\sup\{ \alpha>0: \ Q(M)+\alpha\det M\geq0 \text{ for every } M\in \R^{2\times 2}_{\sym}\}
$$
and
$$
\alpha_\K^-:=\sup\{ \alpha>0: \ Q(M)-\alpha\det M\geq0 \text{ for every } M\in \R^{2\times 2}_{\sym}\}.
$$
The limiting energy density is the function $\overline Q:I\times\R\times \R\to [0,+\infty)$ defined by
\begin{align*}
\overline Q(x_1,\mu,\tau) : = \min \Big\{  Q(D(x_1)^{-T}(A-A^{\circ}(x_1))D(x_1)^{-1})\det D(x_1)
+\alpha^+_\K \frac{(\det A)^+}{\det D(x_1)} +\alpha^-_\K \frac{(\det A)^-}{\det D(x_1)}\ :&\\
A=
\begin{pmatrix}
\mu & \tau
\\
\tau & \gamma
\end{pmatrix}, \ \gamma\in\R
&\Big\}
\end{align*}
for every $x_1\in I$, $\mu,\tau\in\R$, where $(\det A)^+:=\det A \vee 0$, $(\det A)^-:=-(\det A\wedge 0)$, and $D(x_1)=\nabla\chi(x_1,0)$.
The $\Gamma$-limit functional $J:\mathcal{A}\to \R$ is given by
$$
J(d_1,d_2,d_3):=\int_I \o Q(x_1,d_1'\cdot d_3, d_2'\cdot d_3)\,dx_1
$$
for every $(d_1,d_2,d_3)\in\mathcal A$.

\begin{remark}
Let $D^\alpha:=D^{-T}e_\alpha$ be the contravariant vectors in the reference configurations, i.e.,
$D^\alpha\cdot D_\beta=\delta_{\alpha\beta}$. It is easy to see that $\overline Q$ has also the following characterization:
\begin{align*}
\overline Q(x_1,\mu,\tau) : = \min \Big\{  \big(Q(M-& D^{-T}A^{\circ}D^{-1})+\alpha^+_\K{(\det M)^+}+\alpha^-_\K {(\det M)^-}\ \big)\det D:\\
& M=\mu D^1\otimes D^1+\tau(D^1\otimes D^2+D^2\otimes D^1)+\gamma D^2\otimes D^2, \ \gamma\in\R
\Big\}.
\end{align*}
\end{remark}

We are now in a position to state the $\Gamma$-convergence result.

\begin{theorem}\label{Gamma}
As $\e\to 0$, the sequence $(J_\e)$ $\Gamma$-converges to the functional $J$ in the following sense:
\begin{enumerate}
\item[(i)] {\rm (liminf inequality)} for every sequence $(y_{\e})\subset W^{2,2}_{{\rm iso}, \e}(\Omega;\R^3)$,
$y\in W^{2,2}(I; \R^3)$, and $(d_1,d_2,d_3)\in \mathcal{A}$ such that $y'=d_1$ a.e.\ in $I$, $y_{\e}\weak y$ in $W^{2,2}(\Omega; \R^3)$,
$\partial_1 y_{\e} \weak d_1$ and $\frac{\partial_2 y_{\e}}{\e} \weak d_2$ in $W^{1,2}(\Omega; \R^{3})$,
we have that
$$
\liminf_{\e\to 0} J_\e(y_\e)\ge J(d_1,d_2,d_3);
$$
\item[(ii)] {\rm (recovery sequence)} for every $(d_1,d_2,d_3)\in \mathcal{A}$ there exists a sequence
$(y_{\e})\subset W^{2,2}_{{\rm iso}, \e}(\Omega;\R^3)$ such that
$y_{\e}\weak y$ in $W^{2,2}(\Omega; \R^3)$,
$\partial_1 y_{\e} \weak d_1$ and $\frac{\partial_2 y_{\e}}{\e} \weak d_2$ in $W^{1,2}(\Omega; \R^{3})$, and
$$
\limsup_{\e \to 0} J_\e(y_\e)\le J(d_1,d_2,d_3),
$$
where $y$ is defined up to a constant by $y'=d_1$ a.e.\ in $I$.
\end{enumerate}
\end{theorem}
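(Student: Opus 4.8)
The plan is to establish the liminf inequality (i) and the existence of a recovery sequence (ii) separately. For (i) the decisive observation is that every rescaled second fundamental form obeys the pointwise constraint $\det A_{y_\e,\e}=0$ (a consequence of the Gauss equation: $y_\e$ is a $W^{2,2}$ rescaled isometric immersion of a flat region, cf.\ \eqref{AyAu}). The idea is then to add to the energy density the two terms $\alpha_\K^+(\det A_{y_\e,\e})^+/\det D^\e$ and $\alpha_\K^-(\det A_{y_\e,\e})^-/\det D^\e$, which vanish identically, and to note that the resulting integrand $M\mapsto Q\big((D^\e)^{-T}(M-A^\circ_\e)(D^\e)^{-1}\big)\det D^\e+\big(\alpha_\K^+(\det M)^++\alpha_\K^-(\det M)^-\big)/\det D^\e$ is \emph{convex} in $M\in\R^{2\times 2}_{\sym}$: its $2$-homogeneous part is $\det D^\e$ times $\max\big\{Q(N)+\alpha_\K^+\det N,\;Q(N)-\alpha_\K^-\det N\big\}$ with $N=(D^\e)^{-T}M(D^\e)^{-1}$, and both forms inside the maximum are positive semidefinite by the very definition of $\alpha_\K^\pm$, while the lower-order terms in $M$ are harmless. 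This is the easy half of the relaxation result of Section~\ref{sec_relax}. For (ii) the plan is to reduce, by density in $\mathcal{A}$ and continuity of $J$, to a smooth triple $(d_1,d_2,d_3)$ with a smooth optimal profile $\gamma^\ast$; then, setting $\mu:=d_1'\cdot d_3$, $\tau:=d_2'\cdot d_3$ and $A^\ast:=\mu\,e_1\otimes e_1+\tau(e_1\otimes e_2+e_2\otimes e_1)+\gamma^\ast\,e_2\otimes e_2$, to build an explicit developable ribbon when $\det A^\ast\equiv0$ and a laminated, oscillating family of developable ribbons --- produced by the relaxation result --- when $\det A^\ast\neq0$, the geometric realisation as genuine isometric immersions being supplied by Section~\ref{Pro}.

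For (i), I may assume $\liminf_\e J_\e(y_\e)<\infty$, pass to a subsequence realising the liminf, and use \eqref{BoundAye} to obtain (along a further subsequence) $A_{y_\e,\e}\weak A$ in $L^2(\Omega;\R^{2\times 2}_{\sym})$. Exactly as in the proof of Lemma~\ref{compactness}, using the assumed convergences of $\partial_1 y_\e$ and $\e^{-1}\partial_2 y_\e$, the strong convergence $n_{y_\e}\to d_3$, and $\partial_1\partial_1 y_\e\weak y''$, $\e^{-1}\partial_1\partial_2 y_\e\weak d_2'$ in $L^2$, one identifies $e_1\cdot Ae_1=d_1'\cdot d_3$, $e_1\cdot Ae_2=d_2'\cdot d_3$, while $e_2\cdot Ae_2=:\gamma\in L^2(\Omega)$ is free, i.e.\ $A$ has the form \eqref{conv_second}. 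Writing $J_\e(y_\e)=\int_\Omega\Psi_\e(x,A_{y_\e,\e})\,dx$ with $\Psi_\e$ the modified integrand above (which equals the original one since $\det A_{y_\e,\e}=0$), I would pass to the limit: the term of $\Psi_\e$ linear in $A_{y_\e,\e}$ converges by weak--strong convergence (using \eqref{wconvAo} and $D^\e\to D$ uniformly), the $A^\circ_\e$-only term converges, and the convex $2$-homogeneous part has weakly lower semicontinuous integral (the coefficient $D^\e$ converges uniformly). This yields $\liminf_\e J_\e(y_\e)\ge\int_\Omega\big[Q\big(D^{-T}(A-A^\circ)D^{-1}\big)\det D+\big(\alpha_\K^+(\det A)^++\alpha_\K^-(\det A)^-\big)/\det D\big]\,dx$ with $D=D(x_1)$, $A^\circ=A^\circ(x_1)$; for a.e.\ $(x_1,x_2)$ the integrand is $\ge\overline Q(x_1,d_1'\cdot d_3,d_2'\cdot d_3)$ (it is the value at the admissible $\gamma=\gamma(x_1,x_2)$ of the minimum defining $\overline Q$), and since the bound does not depend on $x_2$, integrating over $\Omega=I\times(-1/2,1/2)$ gives $J(d_1,d_2,d_3)$.

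For (ii), a diagonal argument reduces the construction to a subclass of $\mathcal{A}$ that is dense for the topology in which $J$ is continuous; using a parametrisation of $\mathcal{A}$ by rotation fields and the approximation results of Section~\ref{Pro}, I reduce to $(d_1,d_2,d_3)$ smooth and, after partitioning $I$, to subintervals on which $\gamma^\ast$ is smooth and $\det A^\ast$ has constant sign. If $\det A^\ast\equiv0$, then $A^\ast$ is rank one, its kernel prescribes a ruling direction, and a developable strip built over the curve $y$ with frame $(d_1,d_2,d_3)$ and rulings transverse to that direction has, for $\e$ small, rescaled second fundamental form converging strongly to $A^\ast$; the corresponding $W^{2,2}$ isometric immersion $u_\e$, the convergences $\partial_1 y_\e\to d_1$, $\e^{-1}\partial_2 y_\e\to d_2$, and $J_\e(y_\e)\to\int\overline Q$ come from the geometric lemmas of Section~\ref{Pro}. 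If $\det A^\ast\neq0$, I invoke the relaxation result of Section~\ref{sec_relax} to get rank-one matrices $B^\pm$ with $\det B^\pm=0$ and $\lambda\in(0,1)$ such that $\lambda B^++(1-\lambda)B^-=A^\ast$ and, by the parallelogram identity for the quadratic form $Q\big(D^{-T}(\cdot-A^\circ)D^{-1}\big)$, $\lambda Q\big(D^{-T}(B^+-A^\circ)D^{-1}\big)+(1-\lambda)Q\big(D^{-T}(B^--A^\circ)D^{-1}\big)=\overline Q/\det D$; subdividing each subinterval into stripes of width $\delta\to0$ in the proportion $\lambda:(1-\lambda)$, gluing on them the developable construction of the previous case with target $B^+$, resp.\ $B^-$, through transition layers of relative width $\eta\to0$, produces admissible maps whose rescaled second fundamental forms converge weakly to $A^\ast$ and whose energies converge to $\int\overline Q$; a final diagonal extraction in $(\e,\delta,\eta)$ and in the density reduction completes the argument.

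I expect the main obstacle to be the recovery sequence when $\det A^\ast\neq0$: the laminate must be realised not as an abstract $L^2$ matrix field but as genuine $W^{2,2}$ isometric immersions of the strips $S_\e$, so that $\partial_1 y_\e\weak d_1$ and $\e^{-1}\partial_2 y_\e\weak d_2$ survive and the energy of the many transition layers stays negligible. This demands a careful balancing of the three small scales (the width $\e$, the lamination scale $\delta$, the transition scale $\eta$), and it is exactly here that the relaxation theorem of Section~\ref{sec_relax} and the isometric-immersion constructions of Section~\ref{Pro} are indispensable; the liminf inequality, by contrast, is essentially convexity plus lower semicontinuity once the null constraint $\det A_{y_\e,\e}=0$ has been used.
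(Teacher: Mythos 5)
Your treatment of part (i) is essentially the paper's: you observe $\det A_{y_\e,\e}=0$, add the null terms $\alpha_\K^\pm(\det A_{y_\e,\e})^\pm$, and use convexity of the modified integrand plus weak lower semicontinuity, together with weak--strong convergence for the cross terms; the paper does exactly this, phrased as a direct application of Proposition~\ref{lsh} to the rescaled fields $M_\e=(D^\e)^{-T}A_{y_\e,\e}(D^\e)^{-1}\sqrt{\det D^\e}$ and then minimization over $\gamma$ pointwise in $x_1$.

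For part (ii), however, your plan diverges from the paper and contains a genuine gap. You propose to build the recovery sequence as a spatial laminate: subdivide $I$ into $\delta$-stripes, on each stripe construct a developable ribbon with second fundamental form $B^+$ or $B^-$, and glue them through transition layers of relative width $\eta$, with a final diagonal extraction in $(\e,\delta,\eta)$. The step you leave unresolved --- gluing two developable $W^{2,2}$ isometric immersions across a transition layer while matching the frame and keeping the energy of the layer negligible --- is not addressed by the geometric lemmas of Section~\ref{Pro}: Lemma~\ref{nhood} and Proposition~\ref{propo} produce a \emph{single} isometric immersion over a full two-dimensional neighbourhood of the midcurve from a \emph{single} rank-one matrix field $\lambda p\otimes p$; they give you no tool for welding patches. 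The nonlinear isometry constraint makes such gluing delicate, and the three-scale balancing $\e\ll\eta\delta$ (or whatever ordering you intend) is exactly what the paper avoids. The paper's route is simpler and does not laminate at the level of surfaces at all: first apply Proposition~\ref{lsh} on the one-dimensional domain $I$ to obtain $L^2$ matrix fields $\tilde M^\delta$ with $\det\tilde M^\delta=0$, $\tilde M^\delta\rightharpoonup M\sqrt{\det D}$, and $\int_I Q(\tilde M^\delta)\to\int_I \overline\FF$; then regularize via Lemma~\ref{smoothing} to get $M^\delta=\la^\delta p^\delta\otimes p^\delta$ with $p^\delta\cdot D_1>0$; then, for each fixed $\delta$, Proposition~\ref{propo} yields one isometric immersion $u^\delta$ on a neighbourhood $U^\delta\supset B(\overline I)$ with $A_{u^\delta}\circ B=M^\delta$, so that $y^\delta_\e:=u^\delta\circ\chi_\e$ converges \emph{strongly} as $\e\to0$ with the right energy limit; a single diagonal in $(\e,\delta)$ finishes the proof. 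In other words, Proposition~\ref{lsh} already packages the oscillation/lamination at the level of scalar fields on $I$; what remains is to realize an arbitrary rank-one field along the curve by an isometry, which is precisely what Lemma~\ref{smoothing} and Proposition~\ref{propo} do, and no gluing of surfaces is ever needed. You should also note that the optimal $\gamma$ need not be smooth nor have $\det A^\ast$ of locally constant sign, so your preliminary reduction to those cases would itself require justification; the paper sidesteps this entirely by working directly with $L^2$ data.
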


Theorem~\ref{Gamma} will be proved in Section~\ref{proofs}. The proof will be based on two main ingredients: a relaxation
result, which is the subject of the next section, and a geometric construction of isometric immersions done in Section~\ref{Pro}.

We conclude this section with some examples.
By the assumptions made on the tensor $\K$, in a fixed orthonormal basis we may write
$$\frac 12 \K M\cdot M=\frac 12 \K_{\alpha\beta\gamma\delta}M_{\alpha\beta}M_{\gamma\delta}=
\frac 12 \left( \begin{array}{ccc}
\K_{1111} & \K_{1122} & \K_{1112} \\
\K_{1122} & \K_{2222} & \K_{1222} \\
\K_{1112} & \K_{1222} & \K_{1212}
\end{array}\right)
\left( \begin{array}{c}
M_{11} \\
M_{22} \\
2M_{12}
\end{array}\right)
\cdot
\left( \begin{array}{c}
M_{11} \\
M_{22} \\
2M_{12}
\end{array}\right).
$$

\begin{example}\label{exorth} We consider an orthotropic material with respect to the chosen axes, i.e.,
we assume $\K_{1112}=\K_{1222}=0$. We set $2K_{11}=\K_{1111}, 2K_{12}=\K_{1122}, 2K_{22}=\K_{2222},$ and
$2K_{33}=\K_{1212}.$   Then, setting $m=(M_{11}, M_{22}, 2M_{12})^T\in\R^3$, we have
$$
Q(M)\pm\alpha \det M=(\CC\pm\alpha \DD )m\cdot m,
$$
where
$$
\CC\pm\alpha \DD=
\left( \begin{array}{ccc}
K_{11} & K_{12} & 0 \\
K_{12} & K_{22} & 0 \\
0 & 0 & K_{33}
\end{array}\right)
\pm \alpha
\left( \begin{array}{ccc}
0 & \frac12 & 0 \\
\frac12 & 0 & 0 \\
0 & 0 & -\frac14
\end{array}\right).
$$
By definition, $\alpha^\pm$ is the largest value of $\alpha$ for which all the eigenvalues of $\CC\pm\alpha \DD$ are greater or equal to zero.
A simple computation shows that the eigenvalues of $\CC\pm\alpha \DD$ are
$$
K_{33}\mp \frac{\alpha}4,\qquad\frac{K_{11}+K_{22}}2- \Big(\big(\frac{K_{11}-K_{22}}2\big)^2+\big(K_{12}\pm \frac{\alpha}2\big)^2\Big)^{1/2}
$$
where we omitted the third eigenvalue since it is always positive. By imposing these expressions to be always greater or equal to zero we find
\begin{align*}
\alpha_\K^+&=\min\{4 K_{33}, 2\big( \sqrt{K_{11}K_{22}}-K_{12}\big)\},\\
\alpha_\K^-&=2\big( \sqrt{K_{11}K_{22}}+K_{12}\big).
\end{align*}
If we take $A^\circ=0$ and $D$ equal to the identity, i.e.,  $S_\eps=\Omega_\eps$,
and we assume that $\alpha_\K^+=2\big( \sqrt{K_{11}K_{22}}-K_{12}\big)$, it follows that
$$
\overline Q(x_1,\mu,\tau)  =
\left\{
\begin{array}{ll}
 \displaystyle \frac{K_{11}\mu^4+(2 K_{12}+4 K_{33})\mu^2\tau^2+K_{22}\tau^4}{\mu^2} &
  \mbox{if } \sqrt{K_{11}} \mu^2 > \sqrt{K_{22}} \tau^2,\\[2ex]
 (4K_{33}+2\sqrt{K_{11}K_{22}}+K_{12})\tau^2 & \mbox{if } \sqrt{K_{11}} \mu^2 \le \sqrt{K_{22}} \tau^2.
\end{array}
\right.
$$
\end{example}

\begin{example}\label{exsquare}
  The case $Q(M)=|M|^2$, which corresponds to the case considered in \cite{FrHoMoPa}, can be recovered by Example~\ref{exorth} by setting
$K_{11}=K_{22}=1$, $K_{12}=0$, and $K_{33}=1/2$. In this case we obtain
$$\alpha_\K^+=\alpha_\K^-=2,$$
so that
$$
Q(M)+\alpha_\K^+ (\det M)^+ + \alpha_\K^- (\det M)^- = |M|^2 + 2 |\det M|.
$$
Again, for $A^\circ=0$ and $D$ equal to the identity,
we infer
$$
\overline Q(x_1,\mu,\tau)  =
\left\{
\begin{array}{ll}
 \displaystyle \frac{(\mu^2+\tau^2)^2}{\mu^2} &
  \mbox{if }  \mu^2 >  \tau^2,\\[2ex]
 4\tau^2 & \mbox{if }  \mu^2 \le  \tau^2.
\end{array}
\right.
$$
 \end{example}

\begin{example}
  For an isotropic material
$$
Q(M)=K_\mu |M|^2 + K_\lambda (\Tr M)^2,
$$
we have
$$
\alpha_\K^+=2 K_\mu, \qquad \alpha_\K^-=2K_\mu+4 K_\lambda,
$$
as follows from Example~\ref{exorth} with $K_{11}=K_{22}= K_\mu+K_\lambda$, $K_{12}=K_\lambda$,
and $K_{33}=K_\mu/2$.
By means of the identity
$$
(\Tr M)^2 =|M|^2 +2\det M =|M|^2 +2(\det M)^+ -2(\det M)^-
$$
which holds for every $M\in \R^{2\times 2}_{\rm sym}$, we find
\begin{align*}
Q(M)+\alpha_\K^+ &(\det M)^+ + \alpha_\K^- (\det M)^- =\\
&=K_\mu |M|^2 + K_\lambda (\Tr M)^2+2 K_\mu (\det M)^+ + (2K_\mu+4 K_\lambda)(\det M)^- \\
&= (K_\mu  + K_\lambda)|M|^2+ 2(K_\mu  + K_\lambda)(\det M)^+ +2(K_\mu  + K_\lambda)(\det M)^-\\
&= (K_\mu  + K_\lambda)|M|^2+ 2(K_\mu  + K_\lambda)|\det M|.
\end{align*}
The same result can also be obtained by observing that
$Q(M)=(K_\mu  + K_\lambda)|M|^2$ for every $M$ with $\det M=0$,
and then by applying Example~\ref{exsquare}.
\end{example}

\section{Relaxation of quadratic functionals with a determinant constraint}\label{sec_relax}

Let $\mathcal B$ be a bounded open subset of $\R^n$.
Let $z:\mathcal B\to\R$ be a measurable function and let $Q:\mathcal B\times \R^{2\times 2}_{\sym}\to[0,+\infty)$ be measurable in the first variable and quadratic in the second. Define the functional
$$
\FF : L^2\left(\mathcal B;\R^{2\times 2}_{\sym}\right)\to[0,+\infty]
$$
by
$$
\FF(M) :=
\begin{cases}
\displaystyle\int_{\mathcal B} Q(x,M(x))\, dx & \text{ if $\det M = z$ a.e.\ in } \mathcal B,
\smallskip
\\
+ \infty & \text{ otherwise.}
\end{cases}
$$

\begin{proposition}\label{lsh}
The weak-$L^2$ lower semicontinuous envelope of $\FF$ is the functional
$$
\o\FF : L^2\left(\mathcal B;\R^{2\times 2}_{\sym}\right)\to[0,+\infty)
$$
given by
$$
\o\FF(M) = \int_{\mathcal B} \left( Q(x,M(x)) + \alpha^+(x) (\det M(x)-z(x))^+ +\alpha^-(x) (\det M(x)-z(x))^-  \right)\, dx,
$$
where for every $x\in \mathcal B$
$$
\alpha^+(x):=\sup\{ \alpha>0: \ Q(x,M)+\alpha\det M\geq0 \text{ for every } M\in \R^{2\times 2}_{\sym}\}
$$
and
$$
\alpha^-(x):=\sup\{ \alpha>0: \ Q(x,M)-\alpha\det M\geq0 \text{ for every } M\in \R^{2\times 2}_{\sym}\}.
$$
\end{proposition}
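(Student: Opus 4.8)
The plan is to reduce the assertion to a pointwise (in $x$) computation in convex analysis and then invoke the classical relaxation theory for integral functionals depending on $M$ alone; since there is no differential constraint, ordinary convexity in $M$ — not quasiconvexity — is the relevant notion. For fixed $x\in\mathcal B$ write $Q_x:=Q(x,\cdot)$, $z:=z(x)$, and let $h_x:\R^{2\times 2}_{\sym}\to[0,+\infty]$ equal $Q_x(M)$ on $\{\det M=z\}$ and $+\infty$ elsewhere; set
$$
g_x(M):=Q_x(M)+\alpha^+(x)(\det M-z)^++\alpha^-(x)(\det M-z)^-.
$$
The heart of the proof is to show that $g_x$ is the convex lower semicontinuous envelope of $h_x$, i.e.\ $g_x=h_x^{**}$, and that $x\mapsto\alpha^\pm(x)$ is measurable. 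Granting this, the lower bound $\o\FF(M)\ge\int_{\mathcal B}g_x(M(x))\,dx$ is immediate: $M\mapsto\int_{\mathcal B}g_x(M(x))\,dx$ is convex (each $g_x$ is) and strongly $L^2$-lower semicontinuous (Fatou's lemma and continuity of $g_x$), hence weakly $L^2$-lower semicontinuous, and it lies below $\FF$ since $g_x\le h_x$; being a weakly lower semicontinuous minorant of $\FF$, it is $\le\o\FF$. The reverse inequality is the recovery-sequence construction described below.

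To see that $g_x$ is convex: $\alpha^+(x)$ is the supremum of the closed convex set of $\alpha\ge0$ for which $Q_x+\alpha\det\ge0$, so $Q_x+\alpha^+(x)\det$ is positive semidefinite, hence convex, and likewise $Q_x-\alpha^-(x)\det$; using $\alpha^+s^++\alpha^-s^-=\max(\alpha^+s,-\alpha^-s)$ for $s\in\R$, $\alpha^\pm\ge0$, one writes
$$
g_x(M)=\max\bigl(Q_x(M)+\alpha^+(x)(\det M-z),\ Q_x(M)-\alpha^-(x)(\det M-z)\bigr),
$$
a maximum of two convex functions. Since $g_x$ is also continuous and $g_x\le h_x$ (with equality on $\{\det M=z\}$), we get $g_x\le h_x^{**}$.

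For the opposite inequality it suffices, because $h_x^{**}\le\co h_x$, to show $\co h_x(M_0)\le g_x(M_0)$ for every $M_0$, i.e.\ to write $M_0$ as a finite convex combination of matrices of determinant $z$ with energy close to $g_x(M_0)$. This is trivial if $\det M_0=z$. If $\det M_0>z$ (the case $\det M_0<z$ is symmetric, with $\alpha^-$ in place of $\alpha^+$), choose any symmetric $N$ with $\det N<0$; then $t\mapsto\det(M_0+tN)=\det M_0+t\,(\cof M_0\cdot N)+t^2\det N$ attains the value $z$ at two real roots $t_-<0<t_+$ (product $(\det M_0-z)/\det N<0$, hence positive discriminant), and with $M_\pm:=M_0+t_\pm N$ and $\lambda:=|t_-|/(|t_+|+|t_-|)\in(0,1)$ one has $\det M_\pm=z$, $\lambda M_++(1-\lambda)M_-=M_0$, and, the cross term cancelling by the choice of $\lambda$,
$$
\lambda Q_x(M_+)+(1-\lambda)Q_x(M_-)=Q_x(M_0)+Q_x(N)\,|t_+t_-|=Q_x(M_0)+\frac{Q_x(N)}{-\det N}\,(\det M_0-z).
$$
Testing $Q_x+\alpha\det\ge0$ against matrices of negative, zero and positive determinant separately shows $\alpha^+(x)=\inf\{Q_x(N)/(-\det N):\det N<0\}=\inf\{Q_x(N):\det N=-1\}$; taking the infimum over $N$ above yields $\co h_x(M_0)\le Q_x(M_0)+\alpha^+(x)(\det M_0-z)=g_x(M_0)$, so $h_x^{**}=g_x$. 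The last identity also shows that $x\mapsto\alpha^+(x)$, and similarly $\alpha^-(x)$, is measurable (the infimum may be restricted to a countable dense set of matrices of determinant $-1$, by continuity of $Q_x$) and finite (test with one such matrix).

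For the recovery sequence, fix $M\in L^2(\mathcal B;\R^{2\times 2}_{\sym})$ with $\int_{\mathcal B}g_x(M(x))\,dx<\infty$ and $\delta>0$, and run the construction above with measurable dependence on $x$ (a standard measurable selection of a $\delta$-suboptimal $N(x)$): this produces measurable $M_\pm:\mathcal B\to\R^{2\times 2}_{\sym}$, $\lambda:\mathcal B\to[0,1]$ with $\det M_\pm=z$ a.e., $\lambda M_++(1-\lambda)M_-=M$, $M_\pm\in L^2$, and $\lambda Q_x(M_+)+(1-\lambda)Q_x(M_-)\le g_x(M(x))+\delta$ a.e. Partitioning $\mathcal B$ (up to a null set) into cubes of side $1/k$ and, inside each cube, letting the competitor be $M_+(x)$ on a measurable subset of relative measure $\lambda(x)$ and $M_-(x)$ on the complement, one obtains $M^\delta_k$ with $\det M^\delta_k=z$ a.e., $M^\delta_k\weak M$ in $L^2$ as $k\to\infty$, and $\int_{\mathcal B}Q_x(M^\delta_k(x))\,dx\to\int_{\mathcal B}\bigl(\lambda Q_x(M_+)+(1-\lambda)Q_x(M_-)\bigr)\,dx\le\int_{\mathcal B}g_x(M(x))\,dx+\delta|\mathcal B|$. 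A diagonal argument over $k\to\infty$ and then $\delta\to0$ gives $\o\FF(M)\le\int_{\mathcal B}g_x(M(x))\,dx$, finishing the proof. I expect the main obstacle to be the pointwise identity $h_x^{**}=g_x$: one must exhibit a null direction $N$ of the shifted determinant for every $M_0$ and recognise the optimal value of the resulting one-parameter family as $\alpha^\pm(x)$ without assuming that defining infimum is attained; the measurable selection and laminate steps are routine, though the $L^2$ bounds on $M_\pm$ must be monitored (they are automatic when $Q$ is uniformly positive definite, as in the application, and the general case is recovered via the regularisation $Q\rightsquigarrow Q+\eta|\cdot|^2$, $\eta\to0$).
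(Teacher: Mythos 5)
Your proof is correct, but it takes a genuinely different route from the paper's on both the abstract and the pointwise level. The paper does not construct recovery sequences at all: it quotes \cite[Proposition~3.16 and Theorem~6.68]{FoLe} to identify the weak-$L^2$ lsc envelope of $\FF$ with $M\mapsto\int_{\mathcal B}Q_0^{**}(x,M(x))\,dx$, where $Q_0(x,\cdot)=Q(x,\cdot)+\chi_{\{\det=z\}}$, and then the whole effort is the pointwise computation of $Q_0^{**}$. You instead prove both inequalities by hand: a Fatou/convexity argument for the lower bound, and a laminate on a fine cube decomposition (plus a diagonal argument) for the recovery sequence. For the pointwise bipolar identity the two proofs also differ. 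The paper works in the coordinates $m=(M_{11},M_{22},2M_{12})$, writes $\det m=\DD m\cdot m$, and establishes the upper bound via the \emph{definition} of the bipolar: it shows that for every dual variable $\xi$ the function $g_\xi(m)=\CC m\cdot m-m\cdot\xi+\alpha^+(\det m-z)^++\alpha^-(\det m-z)^-$ admits a minimizer $\xi^*$ with $\det\xi^*=z$, by sliding any interior critical point along a kernel direction of $\CC+\alpha^+\DD$ (resp.\ $\CC-\alpha^-\DD$) until the determinant constraint is hit; the key facts are that $\xi$ must be orthogonal to this kernel and that kernel vectors have negative determinant. You prove the equivalent inequality $\co h_x\le g_x$ directly, by writing any $M_0$ with $\det M_0\neq z$ as a convex combination of the two intersections of the line $M_0+tN$ with $\{\det=z\}$, for $N$ of the opposite determinant sign, and then optimizing over $N$; this naturally produces the useful characterization $\alpha^+(x)=\inf\{Q_x(N):\det N=-1\}$ (and symmetrically for $\alpha^-$), which the paper never states explicitly. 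Your route is more elementary and self-contained (no appeal to relaxation theory for integral functionals), and it exposes the laminate microstructure underlying the relaxed energy; the price is that you must monitor $L^2$ bounds on the laminates yourself, which you correctly flag and which are automatic under the paper's abstract machinery. Under the paper's standing assumption that $\K$ is uniformly positive definite this is not an issue, so the two proofs are both complete for the stated application.
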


\begin{remark}
If $Q(x,M)=|M|^2$ and $z=0$, then $\alpha^+=\alpha^-=2$, and the lower semicontinuous envelope takes the form
$$
\o\FF(M) = \int_{\mathcal B} \left( Q(M(x)) + 2|\det M(x)|  \right)\, dx
$$
for every $M\in  L^2\left(\mathcal B;\R^{2\times 2}_{\sym}\right)$, see also Example~\ref{exsquare}.
\end{remark}

\begin{proof}[Proof of Proposition~\ref{lsh}]
By \cite[Proposition~3.16]{FoLe} we have that $\overline\FF$ is also the sequentially lower semicontinuous envelope of $\FF$, that is,  the largest function below $\FF$ that is sequentially lower semicontinuous with respect to the weak-$L^2$ topology. Moreover, by \cite[Theorem~6.68]{FoLe}, the lower semicontinuous envelope of $\FF$ is given by
$$
\o\FF(M) = \int_{\mathcal B} Q^{**}_0(x,M(x)) \, dx,
$$
where for every fixed $x\in \mathcal B$ the function $Q^{**}_0(x,\cdot)$ is the bipolar function of $Q_0(x,\cdot)$ and
$Q_0:\mathcal B\times\R^{2\times 2}_{\sym}\to[0,+\infty]$ is defined by
$$
Q_0(x,M)=Q(x,M)+\chi_{\{\det=z\}}(x,M)
$$
for every $M\in \R^{2\times 2}_{\sym}$. Here $\chi_{\{\det=z\}}$ is the indicator function of the set $\{(x,M)\in \mathcal B\times\R^{2\times2}_{\sym}:\ \det M=z(x)\}$.

Hereafter, the variable $x$ will be dropped since it will be kept
fixed until the end of the proof. For instance, we shall write
$Q(M)$ in place of $Q(x,M)$.

We have to prove that
\begin{equation}\label{convex}
Q^{**}_0(M)=Q(M) + \alpha^+ (\det M-z)^+ +\alpha^- (\det M-z)^-
\end{equation}
for every $M\in \R^{2\times 2}_{\sym}$.

In the following we identify matrices $M\in \R^{2\times2}_{\sym}$ with vectors $m=(M_{11}, M_{22}, 2M_{12})^T\in\R^3$.
For every $m\in\R^3$ we define
$$
\det m:=m_1m_2-\frac14 m_3^2,
$$
so that, according to the previous identification, we have $\det M=\det m$. Finally, let $\CC\in\R^{3\times3}_{\sym}$ be such that
$$
Q(M)=\CC m\cdot m
$$
and let $f:\R^3\to[0,+\infty)$ be the function $f(m)=\CC m\cdot m + \chi_{\{\det=z\}}(m)$.
The thesis \eqref{convex} is equivalent to prove that
\begin{equation}\label{convex0}
f^{**}(m)=\CC m\cdot m + \alpha^+(\det m-z)^+ +\alpha^- (\det m-z)^-
\end{equation}
for every $m\in\R^3$.

Let
$$
\DD=\left( \begin{array}{ccc}
0 & \frac12 & 0 \\
\frac12 & 0 & 0 \\
0 & 0 & -\frac14
\end{array}\right),
$$
so that $\det m= \DD m\cdot m$. For every $\alpha\in\R$ we consider the matrices $\CC+\alpha\DD$. By definition of $\alpha^-$ and $\alpha^+$ we have that $\CC+\alpha\DD$ is positive definite for every $\alpha\in (-\alpha^-, \alpha^+)$, while for $\alpha=-\alpha^-$ and
$\alpha=\alpha^+$ some eigenvalues of $\CC+\alpha\DD$ become equal to $0$ and the matrix $\CC+\alpha\DD$ is positive semi-definite.

Since the functions $m\mapsto \CC m\cdot m +\alpha^+(\det m-z)$ and
$m\mapsto \CC m\cdot m -\alpha^-(\det m-z)$ are convex and continuous, and they are both below $f$, we deduce
\begin{eqnarray*}
f^{**}(m) & \geq & \max\big\{ \CC m\cdot m +\alpha^+(\det m-z),\, \CC m\cdot m -\alpha^-(\det m-z) \big\}
\\
& = & \CC m\cdot m + \alpha^+(\det m-z)^+ +\alpha^- (\det m-z)^-.
\end{eqnarray*}

To prove the converse inequality, we use the definition of bipolar function. Thus, we need to show that
for every $m,\xi\in\R^3$ we have
$$
m\cdot\xi - f^*(\xi) \leq \CC m\cdot m + \alpha^+(\det m-z)^+ +\alpha^- (\det m-z)^-,
$$
where $f^*$ is the polar function of $f$. Using the definition of $f^*$, the above inequality follows if we prove that,
for every $m,\xi\in\R^3$ there exists $\xi^*\in\R^3$ with $\det\xi^*=z$ such that
$$
\CC m\cdot m -m\cdot\xi + \alpha^+(\det m-z)^+ +\alpha^- (\det m-z)^-
\geq \CC\xi^*\cdot\xi^*-\xi^*\cdot\xi.
$$
This is equivalent to prove that for every $\xi\in\R^3$ the function
$$
g_\xi(m):=\CC m\cdot m -m\cdot\xi + \alpha^+(\det m-z)^+ +\alpha^- (\det m-z)^-
$$
attains its minimum at a point $\xi^*$ with $\det\xi^*=z$.

We first observe that $g_\xi$ is coercive, since $g_\xi(m)\geq\CC m\cdot m -m\cdot\xi$ for every $m\in\R^3$ and $\CC$ is positive definite.
Since $g_\xi$ is also continuous, $g_\xi$ attains its minimum on $\R^3$.

We now want to prove that there exists a minimizer with determinant equal to $z$. We will argue in the following way:
assume that there exists a minimizer $m^*$ with $\det m^*\neq z$; then we will show that we can construct $\xi^*$ such that
$\det\xi^*=z$ and $g_\xi(\xi^*)=g_\xi(m^*)$.

Let $m^*$ be a minimizer of $g_\xi$ with $\det m^*-z>0$. Then $m^*$ must be a critical point, that is, it is a solution to
$$
2(\CC+\alpha^+\DD)m^*=\xi.
$$
Since $\CC+\alpha^+\DD$ is symmetric, this implies that $\xi\in {\rm Ker}(\CC+\alpha^+\DD)^\perp$.

Let now $m^+\in {\rm Ker}(\CC+\alpha^+\DD)$ with $m^+\neq0$.
We note that $\det m^+<0$ since otherwise $(\CC+\alpha^+\DD)m^+\cdot m^+>0$.
Consider the family of vectors
$$
m_\lambda:=m^*+\lambda m^+.
$$
We observe that $\det m_0-z=\det m^*-z>0$, while $\det m_\lambda-z\simeq\lambda^2\det m^+<0$ for $\lambda$ large enough.
Thus, there exists a suitable $\overline\lambda>0$ for which $\det m_{\overline\lambda}=z$. We set $\xi^*=m_{\overline\lambda}$ {and we have
\begin{eqnarray*}
g_\xi(\xi^*) & = & (\CC+\alpha^+\DD)(m^*+\overline\lambda m^+)\cdot(m^*+\overline\lambda m^+)-(m^*+\overline\lambda m^+)\cdot\xi-\alpha^+z
\\
& = & g_\xi(m^*)+ \overline\lambda^2 (\CC+\alpha^+\DD)m^+\cdot  m^+ +2\overline\lambda (\CC+\alpha^+\DD)m^+\cdot m^*
-\overline\lambda m^+\cdot\xi
\\
& = & g_\xi(m^*),
\end{eqnarray*}
where we used that $m^+\in {\rm Ker}(\CC+\alpha^+\DD)$ and $\xi\in {\rm Ker}(\CC+\alpha^+\DD)^\perp$.}
A similar argument applies to the case where $\det m^*-z<0$.
\end{proof}

\section{Curves on bendings}\label{Pro}

Throughout this section we
identify vectors $a=(a_1,a_2)\in\R^2$ with the corresponding $a=(a_1,a_2,0)\in\R^3$ and viceversa.
Accordingly, we can write $a^\perp:=(a_2,-a_1)=e_3\wedge a$.

Moreover, we will use the following definition:
if $U$ is an open subset of $\R^2$, a {\em bending} of $U$ is a map $u\in W^{1,\infty}(U;\R^3)$ with
$\D u\in O(2,3):=\{Q\in\R^{3\times2}\ :\ Q^TQ=I\}$ almost
everywhere.

In the following we consider
$B\in W^{2,\infty}(I; \R^2)$ to be
an arclength-parametrized {\em embedded} curve, i.e., $|B'| = 1$ and
the continuous extension of $B$ to $\o I$ is injective.
We set $N: =e_3\wedge B'= (B')^{\perp}$.

\begin{lemma}\label{nhood}
Let $G\in W^{1,1}(I; O(2, 3) )$, and assume that there
exist $p\in C^1(\o I; \S^1)$ and $m\in L^1(I; \R^3)$
such that
\begin{equation}
\label{le-p0}
G' = m\otimes p\mbox{ a.e.\ on }I.
\end{equation}
Assume, moreover, that $B'\cdot p \neq 0$ on $\o I$. Then the following are true:
\begin{enumerate}[(i)]
\item there exists $\eta > 0$ such that
the map $\Phi : (-\eta, \eta)\times I \to \R^2$ given by
\begin{equation}\label{le-p}
\Phi(s, t) = B(t) + s\, p^{\perp}(t)
\end{equation}
is a Bilipschitz homeomorphism onto the open set
$U = \Phi\left( (-\eta, \eta)\times I \right)$;
\item the map $u : U\to\R^3$ given by
\begin{equation}
\label{le-u}
u\left(\Phi(s, t)\right) =
\int_0^t G(\sigma)B'(\sigma)\ d\sigma + sG(t)p^{\perp}(t)
\end{equation}
is a bending of $U$. More precisely,
\begin{equation}
\label{le-1}
\D u\left(\Phi(s, t)\right) = G(t)\mbox{ for a.e.\ }(s, t)\in (-\eta, \eta)\times I.
\end{equation}
\end{enumerate}
\end{lemma}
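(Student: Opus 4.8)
The plan is to split the statement into its two parts, proving (i) first and then using it to set up the computation for (ii).

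\textbf{Part (i): $\Phi$ is a bilipschitz homeomorphism for small $\eta$.}
First I would observe that $\Phi(\cdot,\cdot)$ is Lipschitz on $(-\eta,\eta)\times I$ for any fixed $\eta$, since $B\in W^{2,\infty}\subset C^1$ and $p\in C^1(\bar I;\S^1)$. The Jacobian of $\Phi$ is
$$
\nabla\Phi(s,t) = \bigl(\,p^\perp(t)\ \big|\ B'(t) + s\,(p^\perp)'(t)\,\bigr),
$$
so $\det\nabla\Phi(s,0) = \det\bigl(p^\perp(t)\mid B'(t)\bigr) = B'(t)\cdot p(t)$, which is nonzero on the compact set $\bar I$ by hypothesis, hence bounded away from zero in absolute value. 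By continuity (in $s$, uniformly in $t\in\bar I$) there is $\eta>0$ such that $|\det\nabla\Phi(s,t)|\ge c_0>0$ on $(-\eta,\eta)\times\bar I$; together with the Lipschitz bound on $\nabla\Phi$ this gives a two-sided bound on the singular values, i.e.\ $\Phi$ is a local bilipschitz map. For \emph{global} injectivity I would use that $B$ is an embedded arclength curve: the "tubular neighbourhood" argument — if two points $\Phi(s_1,t_1)=\Phi(s_2,t_2)$ were equal with $(s_i,t_i)$ in the strip, then projecting onto $B$ and using that $B$ is an embedding forces $t_1$ close to $t_2$, and then local injectivity finishes it — after possibly shrinking $\eta$ further. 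This is standard, so I would only sketch it. The image $U:=\Phi((-\eta,\eta)\times I)$ is then open by invariance of domain (or directly from the local bilipschitz property), and $\Phi^{-1}$ is Lipschitz with the reciprocal constants.

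\textbf{Part (ii): $u$ is a bending with $\nabla u\circ\Phi = G$.}
Having (i), the map $u$ in \eqref{le-u} is well defined on $U$; I would first check $u$ is Lipschitz, then compute $\nabla u$ via the chain rule $\nabla u\circ\Phi = (\partial_s(u\circ\Phi)\mid\partial_t(u\circ\Phi))\,(\nabla\Phi)^{-1}$ — here one must be a bit careful since $u$ is only $W^{1,\infty}$ and $G$ only $W^{1,1}$, so I would work with the a.e.\ representative and the absolutely continuous representative of the integral. From \eqref{le-u}, $\partial_s(u\circ\Phi)(s,t) = G(t)p^\perp(t)$. For the $t$-derivative, differentiating the integral gives $G(t)B'(t)$ from the first term, and $\partial_t\bigl(sG(t)p^\perp(t)\bigr) = s\,G'(t)p^\perp(t) + sG(t)(p^\perp)'(t)$; the key cancellation is that $G'(t) = m(t)\otimes p(t)$ by \eqref{le-p0}, so $G'(t)p^\perp(t) = (p(t)\cdot p^\perp(t))\,m(t) = 0$. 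Hence
$$
\partial_t(u\circ\Phi)(s,t) = G(t)\bigl(B'(t) + s(p^\perp)'(t)\bigr) = G(t)\,\partial_t\Phi(s,t),
$$
while also $\partial_s(u\circ\Phi)(s,t) = G(t)p^\perp(t) = G(t)\,\partial_s\Phi(s,t)$. Assembling the two columns, $(\partial_s(u\circ\Phi)\mid\partial_t(u\circ\Phi)) = G(t)\,\nabla\Phi(s,t)$, so $\nabla u\circ\Phi = G\,\nabla\Phi\,(\nabla\Phi)^{-1} = G$, which is \eqref{le-1}. Finally $G(t)\in O(2,3)$ means $(\nabla u)^T\nabla u = G^TG = I$ a.e., so $u$ is a bending.

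\textbf{Main obstacle.}
The delicate point is not the formal differentiation — that is the clean cancellation $G'p^\perp=0$ — but justifying it rigorously given the low regularity: $u$ is $W^{1,\infty}$, $G$ is only $W^{1,1}$, and the composition/chain rule with the bilipschitz change of variables $\Phi$ needs care. I would handle this by first establishing that $s\mapsto u(\Phi(s,t))$ and $t\mapsto u(\Phi(s,t))$ are absolutely continuous (the latter using $G\in W^{1,1}$ and $B'\in W^{1,\infty}$) with the derivatives computed above holding a.e., then invoking that a bilipschitz map transports the a.e.\ chain rule; alternatively one can verify $u\in W^{1,\infty}$ directly from the explicit formula and the bounds from (i), and identify the weak gradient by testing against smooth functions and changing variables. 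The global injectivity in (i) is the other place requiring genuine (though routine) care, relying essentially on the embeddedness of $B$.
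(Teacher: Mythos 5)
The proposal is correct and takes essentially the same approach as the paper: part (i) via nonvanishing of $\det\nabla\Phi$ near $s=0$ plus embeddedness of $B$ for global injectivity, and part (ii) via the cancellation $G'p^\perp = (p\cdot p^\perp)\,m = 0$ combined with the chain rule for the bilipschitz change of variables (the paper cites Ziemer for the $W^{1,1}$ chain rule, matching your flagged regularity concern). The only slips are cosmetic: you wrote $\det\nabla\Phi(s,0)$ where you clearly mean $\det\nabla\Phi(0,t)$, and your global injectivity step is sketched rather than carried out (the paper's version splits into $|t-t'|\ge\rho$, handled by embeddedness, and $|t-t'|<\rho$, handled by the inverse function theorem), but the idea is the same.
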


\begin{proof}
The value of the quantity $\eta$ in this proof may change from line to line.
Clearly $\Phi$ is well-defined on all of $\R\times \o I$.
We claim that for all $\rho > 0$ small enough there exist $c$, $\eta > 0$ such that,
for all $t$, $t'\in \o I$ we have
\begin{equation}\label{cur-1}
|t-t'| \geq \rho\mbox{ and }s, s'\in [-\eta, \eta]\ \implies
|\Phi(s, t) - \Phi(s', t')|\geq c.
\end{equation}
In fact, by the hypotheses on $B$, for all $\rho > 0$ there exists $c > 0$ such
that $|B(t) - B(t')|\geq 5c$ whenever $|t - t'| \geq \rho$. Taking $\eta = c$,
the implication \eqref{cur-1} follows because $|p| = 1$.

On the other hand, since $p\in C^1(\o I;\S^1)$
and since $B\in W^{2,\infty}(I; \R^2)\subset C^1(\o I;\R^2)$, we see that
$\Phi\in C^1(\R\times\o I;\R^2)$ and we compute (with $\D\Phi = (\d_s\Phi | \d_t\Phi )$)
\begin{equation}\label{le-2}
\det(\D\Phi(s,t) ) = - p(t)\cdot B'(t) + sp^{\perp}(t)\cdot p'(t)
\text{ on }\R\times\o I.
\end{equation}
For $\eta > 0$ small enough the right-hand side differs from zero
for all $(s, t)\in [-2\eta, 2\eta]\times\o I$ because $p\cdot B'\neq 0$
and $p^{\perp}\cdot p'$ is bounded on $\o I$.
Hence by continuity
$|\det\D\Phi|$ is bounded from below by a positive constant on this set.
As $\D\Phi$ is bounded on this set, the inverse function theorem implies that
there exists $\rho > 0$ such that if $(s, t)$, $(s', t')\in [-\eta, \eta]\times\o I$ then
\begin{equation}\label{cur-2}
0<|t-t'|^2 + |s - s'|^2 \leq\rho^2 \ \implies
\Phi(s, t) \neq \Phi(s', t').
\end{equation}
Combined with \eqref{cur-1} this shows that there exists $\eta > 0$
such that $\Phi$ is injective on $\o V$, where $V = (-\eta, \eta)\times I$.
Thus by the invariance of domain theorem (cf.\ \cite[Theorem~3.30]{FonsecaGangbo}),
the set $U = \Phi(V)$ is open,
and since both $\D\Phi$ and $(\det\D\Phi)^{-1}$ are in $C^0(\o V)$,
the inverse $\Psi$ of $\Phi$ is in $C^1(\o U;\R^2)$.

Denote the right-hand side of \eqref{le-u} by $f(s, t)$ and define $u = f\circ\Psi$,
which is equivalent to \eqref{le-u}.
Since $Gp^{\perp}\in W^{1,1}(I;\R^3)$, we have that $f\in W^{1,1}(V;\R^3)$.
Since $\Psi$ is Bilipschitz, we can apply the
chain rule (cf.\ \cite[Theorem~2.2.2]{Ziemer})
to conclude that $u\in W^{1,1}(U; \R^3)$ and, using the fact that $G'p^{\perp} = 0$ by hypothesis,
that
$$
\D u(\Phi(s,t))\D\Phi(s,t) = (G(t)p^{\perp}(t))\otimes e_1 +  \left( G(t)B'(t) + sG(t)(p^{\perp})'(t)\right)\otimes e_2
= G(t)\D\Phi(s,t).
$$
Since $\D\Phi$ is invertible pointwise on $V$, formula \eqref{le-1}
follows. In particular, $u$ is a bending.
\end{proof}

Let $M\in L^2(I; \R^{2\times 2}_{\sym})$.
A frame $r\in W^{1,2}(I;SO(3))$ is said to be {\em adapted} to the pair $(B, M)$ if
$r$ solves
\begin{equation}\label{ode}
r' =
\begin{pmatrix}
0 & \kappa & \mu
\\
-\kappa & 0 & \tau
\\
-\mu & -\tau & 0
\end{pmatrix}\ r
\end{equation}
with $\kappa = B''\cdot N$ and $\tau = MB'\cdot N$ and $\mu = MB'\cdot B'$.

\begin{proposition}\label{propo}
Let $p\in C^1(\o I;\S^2)$ be such that $p\cdot B' \neq 0$ on $\o I$ and let $\la\in L^2(I)$.
Let $r\in W^{1,2}(I; SO(3))$ be a frame adapted to the pair $(B, \la p\otimes p)$
and let $y\in W^{2,2}(I; \R^3)$ satisfy $y' = r^Te_1$.
Then there exists a neighborhood $U$ of $B(\o I)$ and a bending $u\in W^{2,2}(U; \R^3)$
such that $u\circ B = y$ and $A_u\circ B = \la p\otimes p$, and
\begin{equation}
\label{propo-1}
\D u(B(t) + sp^{\perp}(t)) = (r^T(t)e_1)\otimes B'(t) + (r^T(t)e_2)\otimes N(t)
\end{equation}
for all $t\in I$ and all $|s|$ small enough.
The bending $u$ is explicitly given by formula \eqref{le-u}, where $G$ denotes
the right-hand side of \eqref{propo-1}.
\end{proposition}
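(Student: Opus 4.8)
The plan is to apply Lemma~\ref{nhood} with a suitable choice of the matrix field $G$ built from the adapted frame $r$, and then to verify that the bending $u$ produced by that lemma has the prescribed second fundamental form. First I would set
\[
G(t) := (r^T(t)e_1)\otimes B'(t) + (r^T(t)e_2)\otimes N(t),
\]
which is exactly the right-hand side of \eqref{propo-1}. Since $(B',N)$ is an orthonormal basis of $\R^2$ (viewed inside $\R^3$) and $(r^Te_1,r^Te_2)$ is orthonormal in $\R^3$ because $r\in SO(3)$, one checks immediately that $G^TG=I$, i.e. $G\in O(2,3)$ pointwise; moreover $G\in W^{1,2}(I;O(2,3))$ because $r\in W^{1,2}$ and $B\in W^{2,\infty}$. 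The key computation is to differentiate $G$ using the ODE \eqref{ode} and the Frenet-type relations $B''=\kappa N$, $N'=-\kappa B'$ (which hold since $B$ is arclength-parametrized and $N=(B')^\perp$): one finds that $G'$ has the form $m\otimes p$ for a suitable $m\in L^2(I;\R^3)$, the point being that the $B'$- and $N$-components of $G'$ conspire, thanks to $\kappa$ appearing with opposite signs in $r'$ and in $B'',N'$, so that $G'$ annihilates the direction $p^\perp$ and only "sees" $p$. This is where the choice of the adapted frame with the specific entries $\kappa=B''\cdot N$, $\mu=\la p\otimes p\,B'\cdot B'$, $\tau=\la p\otimes p\,B'\cdot N$ is used in an essential way: it is precisely the compatibility condition that makes $G'$ a rank-one tensor of the form $m\otimes p$ with $p$ the prescribed direction.

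Once \eqref{le-p0} is established, the hypothesis $p\cdot B'\neq0$ on $\o I$ lets me invoke Lemma~\ref{nhood}: there is $\eta>0$ and a neighborhood $U=\Phi((-\eta,\eta)\times I)$ of $B(\o I)$ on which the map $u$ defined by \eqref{le-u} is a bending with $\D u(\Phi(s,t))=G(t)$, which is \eqref{propo-1}. Evaluating \eqref{le-u} at $s=0$ gives $u(B(t))=\int_0^t G(\sigma)B'(\sigma)\,d\sigma=\int_0^t r^T(\sigma)e_1\,d\sigma$, and since $y'=r^Te_1$ with $y\in W^{2,2}$ this equals $y(t)$ up to the additive constant $y(0)$, which we absorb; hence $u\circ B=y$. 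The regularity $u\in W^{2,2}(U;\R^3)$ follows because $\D u\circ\Phi=G\in W^{1,2}$ and $\Phi$, $\Psi$ are bi-Lipschitz $C^1$ diffeomorphisms, so the chain rule upgrades the first derivatives of $u$ to $W^{1,2}$.

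It remains to identify the second fundamental form. The unit normal to $u$ is $\nu_u\circ\Phi(s,t)=G(t)e_1\wedge G(t)e_2=(r^T(t)e_1)\wedge(r^T(t)e_2)=r^T(t)e_3$, again because $r\in SO(3)$. Writing the rows of $\D u$ in the $(B',N)$ coordinates and differentiating, the entries of $A_u$ along $B$ are $A_u\circ B\,B'\cdot B'=\d_t(r^Te_1)\cdot r^Te_3$, $A_u\circ B\,B'\cdot N=\d_t(r^Te_2)\cdot r^Te_3$ (and the same by symmetry), together with the $N\!\otimes\!N$ entry coming from the $s$-derivative. Using \eqref{ode}, $\d_t(r^Te_1)\cdot r^Te_3=\mu=\la\,(p\cdot B')^2$ and $\d_t(r^Te_2)\cdot r^Te_3=\tau=\la\,(p\cdot B')(p\cdot N)$; a short parallel computation for the transversal direction, using that $\D u$ is constant in $s$ up to the $(p^\perp)'$ term that is normal-free, gives the $N\!\otimes\!N$ entry $\la\,(p\cdot N)^2$. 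Assembling these three numbers shows $A_u\circ B=\la\,(p\cdot B')^2\,B'\!\otimes B'+\la\,(p\cdot B')(p\cdot N)(B'\!\otimes N+N\!\otimes B')+\la\,(p\cdot N)^2\,N\!\otimes N=\la\,p\otimes p$ as a quadratic form on $\R^2$, which is the claim. The main obstacle is the bookkeeping in this last step: one has to be careful that the second fundamental form computed intrinsically from $\d_\alpha\d_\beta u$ and $\nu_u$ matches the "rotation-rate" entries of the frame $r$, and that the transversal second-derivative of $u$ (the $s$-dependence via $sG(t)(p^\perp)'(t)$ in \eqref{le-u}) contributes exactly the missing $N\!\otimes N$ component; once the coordinates are set up cleanly this is a direct consequence of \eqref{ode}, but it is the only place where a genuine computation, rather than an algebraic identity for orthogonal matrices, is needed.
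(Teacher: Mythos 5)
Your proof follows the same route as the paper's: set $G=(r^Te_1)\otimes B'+(r^Te_2)\otimes N$, verify $G'=m\otimes p$ via the ODE \eqref{ode} together with $B''=\kappa N$, $N'=-\kappa B'$, invoke Lemma~\ref{nhood}, and then read off $A_u\circ B$. The computation of $G'$ is correct, and in fact the cleaner way to state it (as the paper does) is $G'=(r^Te_3)\otimes(\mu B'+\tau N)$, and then the relation $(B'\cdot p)\tau=(N\cdot p)\mu$ forces $\mu B'+\tau N\parallel p$. Your identification of $A_u\circ B$ differs slightly in presentation: the paper deduces $(A_u\circ B)B'=(\la p\otimes p)B'$ from the $t$-derivative and then closes the argument by the algebraic fact that a symmetric matrix of vanishing determinant with prescribed action on $B'$ (and $p\cdot B'\neq0$) must equal $\la p\otimes p$. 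You instead try to compute the $(N,N)$ entry directly from ``the $s$-derivative.'' That idea is sound, but as written it is imprecise: $\D u\circ\Phi=G(t)$ is exactly independent of $s$ (not ``up to a $(p^\perp)'$ term''; that term lives in $\D\Phi$, not in $\D u$). The correct statement is $(A_u\circ B)p^\perp=0$, which together with symmetry of $A_u$ and $p\cdot B'\neq0$ forces $A_u\circ B$ to be a multiple of $p\otimes p$, and the scalar is then $\la$. You should rewrite that passage in this form.

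There is one genuine gap: you assert that $U=\Phi((-\eta,\eta)\times I)$ is a neighborhood of $B(\o I)$. It is not: the endpoints $B(\pm\ell/2)$ lie on $\d U$, not in its interior, so $U$ is only a neighborhood of $B(\o J)$ for subintervals $J$ with $\o J\subset I$. The paper fixes this by extending $\kappa,\mu,\tau$ by zero to $\R$, extending $B$ and $r$ by solving the Frenet equations and \eqref{ode}, and observing that the hypotheses (in particular $p\cdot B'\neq0$, by continuity of $p$ and $B'$) persist on a slightly larger open interval $I_1\supset\o I$; running the construction over $I_1$ then produces a true open neighborhood of $B(\o I)$. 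You need this extension step, or at least to state that the construction is first carried out on a slightly enlarged interval.
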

\begin{proof}
As $r$ is adapted to $(B, \la p\otimes p)$, it satisfies \eqref{ode} with
$\mu = \la(B'\cdot p)^2$ and $\tau = \la(B'\cdot p)(N\cdot p)$ and $\kappa = B''\cdot N$.
Set
$$
G = (r^Te_1)\otimes B' + (r^Te_2)\otimes N.
$$
Then, a short computation shows that
\begin{equation}
\label{G'}
G' = (r^Te_3)\otimes (\mu B' + \tau N).
\end{equation}
Since $(B'\cdot p)\tau = (N\cdot p)\mu$, we see that
$p^{\perp}\cdot \left( \mu B' + \tau N \right) = 0$. So
$p\parallel (\mu B' + \tau N)$, and therefore $G' = m\otimes p$
for some $m\in L^1(I; \R^3)$.

Lemma \ref{nhood} then shows that the map $\Phi(s, t) = B(t) + sp^{\perp}(t)$
is a Bilipschitz homeomorphism onto its image, and that $u$ given by \eqref{le-u}
satisfies \eqref{le-1}. In particular, $\D u\circ \Phi = G$, which is \eqref{propo-1}. Moreover,
denoting by $n$ the normal to $u$,  we have $n\circ B = r^T e_3$.
After a possible translation we also have $u\circ B = y$.

Finally,
taking derivatives in \eqref{propo-1}, recalling that for an isometric immersion $u$ the relation
$\nabla^2u_k=A_u n_k$ holds,    and using \eqref{G'}, we have
$$
(n\circ B)\otimes (A_u\circ B) B' =  (\D^2 u\circ B)B' = (r^Te_3)\otimes (\mu B' + \tau N).
$$
Inserting the definitions of $\mu$ and of $\tau$,
we see that
\begin{equation}
\label{propo-2}
(A_u\circ B)B' = (\la p\otimes p)B'.
\end{equation}
Since $A_u$ is symmetric with $\det A_u = 0$ and since $p\cdot B' \neq 0$, this readily
implies that $A_u\circ B = \la p\otimes p$.

The proof is essentially complete. However, $\Phi((-\eta, \eta)\times (0, T))$
is not a neighbourhood of $B(\o I)$, although it is a neighbourhood of $B(\o J)$ for any
subinterval $J$ of $I$ with $\o J\subset I$. So we
extend $\mu$, $\tau$ and $\kappa$ by zero to $\R$, and then we extend $B$ and $r$
by solving the Frenet equations and the system \eqref{ode}, respectively. Then there is an open interval $I_1$ with $\o I\subset I_1$
such that the hypotheses of the proposition are still satisfied on $I_1$. Applying the
preceding proof to $I_1$ leads therefore to the conclusion.
\end{proof}

\begin{remark}
In the particular case $B(t) = te_1$ and in the presence of enough regularity,
Proposition \ref{propo} and Lemma \ref{nhood} reduce to
\cite[Lemma~4.3]{FrMoPa} with
$\beta = y$ and $\kappa = 0$. Since $\kappa = 0$,
the condition $y'' \neq 0$ is equivalent to $B'\cdot p \neq 0$.
\end{remark}
\begin{remark}
Condition \eqref{le-p0} is clearly
necessary for \eqref{le-1} to hold (even for $s = 0$). In fact, \eqref{le-1}
implies
$$
G'_{i\alpha}(t) = \sum_\beta \d_\alpha\d_\beta u_i(B(t))B'_\beta(t).
$$
If $u$ is a bending, then $\d_\alpha\d_\beta u \parallel n$ for $\alpha,\beta = 1, 2$. So
indeed the range of $G'(t)$ is contained in the span of $n(B(t))$.
\end{remark}

The next lemma is a smooth approximation result within the class
of symmetric rank-one matrix fields.

\begin{lemma}\label{smoothing}
Let $M\in L^2(I; \R^{2\times 2}_{\sym})$ such that $\det M = 0$
almost everywhere on $I$. Then there exist
$p_n\in W^{1, \infty}(I, \S^1)$ and $\la_n\in C^{\infty}(\o I)$ such
that $p_n\cdot B' > 0$ on $\o I$ and
$$
\la_n p_n\otimes p_n \to M \mbox{ strongly in }L^2(I, \R^{2\times 2}).
$$
More precisely, there exist $\p_n\in C^{\infty}(\o I; (-\pi, \pi))$
such that $p_n = e^{i\p_n}B'$,
where $e^{i\p}$ denotes counter-clockwise
rotation by $\p$.
\end{lemma}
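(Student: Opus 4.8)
The plan is to represent $M$ pointwise as $\lambda\,p\otimes p$ with $p$ a unit vector written as $e^{i\psi}B'$, then to rotate $p$ infinitesimally so that it is never orthogonal to $B'$, and finally to mollify both $\lambda$ and the angle.

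\emph{Step 1 (pointwise representation).} For a.e.\ $t\in I$ the matrix $M(t)$ is symmetric with $\det M(t)=0$, so its eigenvalues are $0$ and $\lambda(t):=\Tr M(t)$; when $\lambda(t)\neq 0$ the eigenspace of $\lambda(t)$ is a line, whence $M(t)=\lambda(t)\,q(t)\otimes q(t)$ for a unit vector $q(t)$, while $M(t)=0$ when $\lambda(t)=0$. Since $(e^{i\psi}B'(t))\otimes(e^{i\psi}B'(t))$ depends only on $\psi$ modulo $\pi$, there is a unique $\psi(t)\in(-\pi/2,\pi/2]$ with $q(t)\otimes q(t)=(e^{i\psi(t)}B'(t))\otimes(e^{i\psi(t)}B'(t))$, and $\psi$ may be taken Borel measurable since it is an explicit function of the entries $MB'\cdot B'$, $MB'\cdot N$, $MN\cdot N$ (which satisfy $(MB'\cdot B')(MN\cdot N)=(MB'\cdot N)^2$, and where $\psi:=0$ say when $\lambda=0$). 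Writing $p^\psi:=e^{i\psi}B'$ we thus have $M=\lambda\,p^\psi\otimes p^\psi$ a.e.\ on $I$, with $\lambda\in L^2(I)$ (as $\lambda=\Tr M$) and with $\cos\psi=p^\psi\cdot B'=0$ precisely on the set $Z:=\{MB'=0\}\cap\{M\neq 0\}$, which need not be Lebesgue negligible.

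\emph{Step 2 (perturbation away from $Z$).} For $\delta\in(0,\pi/4)$ set $\psi_\delta:=\sgn(\psi)\min\{|\psi|,\pi/2-\delta\}$; then $|\psi_\delta|\le\pi/2-\delta$ on $I$ and $\psi_\delta\to\psi$ a.e.\ as $\delta\downarrow 0$. Hence $q_\delta:=e^{i\psi_\delta}B'$ satisfies $|q_\delta|\equiv1$, $q_\delta\cdot B'=\cos\psi_\delta\ge\sin\delta>0$ on $\o I$, and $q_\delta\to p^\psi$ a.e.\ on $I$. Since $|\lambda\,(q_\delta\otimes q_\delta-p^\psi\otimes p^\psi)|\le 2|\lambda|$ pointwise and $4|\lambda|^2\in L^1(I)$, dominated convergence gives $M_\delta:=\lambda\,q_\delta\otimes q_\delta\to M$ in $L^2(I;\R^{2\times2})$.

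\emph{Step 3 (mollification and diagonalization).} Fix $\delta$. Extending $\psi_\delta$ by $0$ to $\R$, mollifying, and restricting to $\o I$ yields $\varphi\in C^\infty(\o I)$ with $\|\varphi\|_{L^\infty(\o I)}\le\pi/2-\delta$ (mollification does not increase the supremum) and $\varphi\to\psi_\delta$ in $L^2(I)$, hence a.e.\ along a subsequence; similarly mollify $\lambda$ to obtain $\widetilde\lambda\in C^\infty(\o I)$ with $\widetilde\lambda\to\lambda$ in $L^2(I)$. Put $p:=e^{i\varphi}B'$. Since $B\in W^{2,\infty}(I;\R^2)$ gives $B',\,N\in W^{1,\infty}(I;\R^2)$ and $\varphi\in C^\infty(\o I)\subset W^{1,\infty}(I)$, we get $p\in W^{1,\infty}(I;\S^1)$, and $p\cdot B'=\cos\varphi\ge\sin\delta>0$ on $\o I$. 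Writing $\widetilde\lambda\,p\otimes p-M_\delta=(\widetilde\lambda-\lambda)\,p\otimes p+\lambda\,(p\otimes p-q_\delta\otimes q_\delta)$, the first term tends to $0$ in $L^2$ because $|p\otimes p|\equiv1$, and the second by dominated convergence as in Step 2. Therefore, given $n$, we first choose $\delta=\delta_n\in(0,\pi/4)$ with $\|M_{\delta_n}-M\|_{L^2}<\tfrac1{2n}$ and then a mollification scale small enough that the resulting $\varphi_n:=\varphi$ and $\lambda_n:=\widetilde\lambda$ satisfy $\|\lambda_n\,p_n\otimes p_n-M_{\delta_n}\|_{L^2}<\tfrac1{2n}$, where $p_n:=e^{i\varphi_n}B'$. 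These sequences have all the required properties: $\lambda_n\in C^\infty(\o I)$, $p_n=e^{i\varphi_n}B'\in W^{1,\infty}(I;\S^1)$ with $p_n\cdot B'>0$ on $\o I$, $\varphi_n\in C^\infty(\o I;(-\pi/2+\delta_n,\pi/2-\delta_n))\subset C^\infty(\o I;(-\pi,\pi))$, and $\lambda_n\,p_n\otimes p_n\to M$ in $L^2(I;\R^{2\times2})$.

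\emph{Main obstacle.} The only real difficulty is Step 1 together with the treatment of the set $Z$ on which $M$ is genuinely orthogonal to $B'$: $Z$ may carry positive measure, so it cannot be discarded, and the point of the construction is that the rank-one field $M=\lambda\,N\otimes N$ on $Z$ is nevertheless approximated in $L^2$ by admissible fields $\lambda\,q_\delta\otimes q_\delta$ with $q_\delta\cdot B'>0$, obtained by an infinitesimal rotation of the direction; everything else reduces to routine measurable selection, mollification, and dominated convergence.
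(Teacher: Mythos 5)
Your proof is correct and follows essentially the same strategy as the paper's: diagonalize $M=\lambda\,p\otimes p$ with $\lambda=\Tr M$ and $p$ written as a rotation $e^{i\psi}B'$, then truncate the angle and mollify both $\lambda$ and the angle. The one genuine difference is in the truncation of the angle. After replacing $p$ by $\t p=\sgn(p\cdot B')\,p$ the paper obtains a lifting $\p\in L^\infty(I;(-\pi,\pi])$ (in fact with values in $[-\pi/2,\pi/2]$), truncates it at $\pm(\pi-\tfrac1n)$, and mollifies; this yields $\p_n\in C^\infty(\o I;(-\pi,\pi))$, but it does not by itself rule out $|\p_n|=\pi/2$ at interior points --- for instance if $\psi\equiv\pi/2$ on an interval of length exceeding the mollification scale --- so the strict inequality $p_n\cdot B'=\cos\p_n>0$ on $\o I$, which is required both by the lemma and by the subsequent application of Proposition~\ref{propo}, is not actually established as written. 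Your extra truncation $|\psi_\delta|\le \pi/2-\delta$ before mollifying (so that $\cos\varphi\ge\sin\delta>0$ uniformly) is exactly the fix; combined with the diagonal choice of $\delta_n$ and mollification scale, it delivers a strictly positive lower bound on $p_n\cdot B'$ while keeping $\varphi_n\in C^\infty(\o I;(-\pi,\pi))$. You also omit the paper's preliminary truncation of $\lambda$; this is harmless, since in your Steps 2--3 the dominating function $2|\lambda|\in L^2$ suffices for dominated convergence without first reducing to $\lambda\in L^\infty$. In short: same approach, plus a small but genuine refinement that closes the gap on the strictness of $p_n\cdot B'>0$.
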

\begin{proof}
Define $p\in L^{\infty}(I; \R^2)$ by setting
$$
p :=
\begin{cases}
\frac{MB'}{|MB'|} &\mbox{ if }MB'\neq 0,
\\
N  &\mbox{ if }MB' = 0.
\end{cases}
$$
and set $\la = \Tr M$. Since $M$ is symmetric, its range is orthogonal to its kernel.
Hence
\begin{equation}\label{lesup-1}
M = \la p\otimes p.
\end{equation}
In fact, if $MB'\neq 0$ then we compute
\begin{align*}
\la (p\otimes p)B' &= (\Tr M) (p\cdot B')\ p
\\
&= \frac{(MB'\cdot B')^2 + (MB'\cdot B') (MN\cdot N)}{(MB'\cdot B')^2  + (MB'\cdot N)^2}\ MB'
\\
&= MB',
\end{align*}
where we have used the fact that $(MB'\cdot N)^2 = (MB'\cdot B')(MN\cdot N)$ because $\det M = 0$.
The above equality remains true when $MB' = 0$. Since clearly the trace of $M$ agrees
with that of $\la p\otimes p$, it follows that their $(N, N)$-components agree as well,
and \eqref{lesup-1} follows.

For fixed $\Lambda > 0$ we can consider the truncated functions
$\tilde{\la}_{\Lambda} = (\Lambda\wedge\la)\vee (-\Lambda)$. Then clearly
$$
\tilde{\la}_{\Lambda} p\otimes p \to \la p\otimes p = M
$$
in $L^2(I; \R^{2\times 2}_{\sym})$, as $\Lambda \uparrow \infty$. Hence, by taking diagonal sequences
we may assume without loss of generality that $\la\in L^{\infty}(I)$.

After possibly replacing $p$ by
$$
\t p :=
\begin{cases}
\sgn(p\cdot B')\, p &\mbox{ if }p\cdot B' \neq 0,
\\
N &\mbox{ if }p\cdot B' = 0,
\end{cases}
$$
we may assume without loss of generality that
there exists a lifting $\p\in L^{\infty}(I; (-\pi, \pi])$ such that
$
p = e^{i\p} B'.
$
Set
$$
\tilde{\p}_n := ((\pi - \tfrac{1}{n})\wedge\p)\vee (\tfrac{1}{n} - \pi)
$$
and extend $\t\p_n$ by zero to $\R$. Denote by $\p_n$ the mollification
of $\t\p_n$ on a scale $1/n$. Then $\p_n\in C^{\infty}(\o I)$ attains values in
$(-\pi, \pi)$ and $\p_n\to\p$ in $L^q(I)$ for all $q \geq 1$.

Choosing $\la_n\in C^{\infty}(\o I)$ such that $\la_n\to \la$ in $L^2(I)$,
the claim follows, because $e^{i\p_n}B'\to p$ in all $L^q(I;\R^2)$.
\end{proof}

\section{Proof of the $\Gamma$-convergence result}\label{proofs}

In this section we prove Theorem~\ref{Gamma}.

\begin{proof}[Proof of Theorem~\ref{Gamma}--(i)]
We may suppose that $\liminf_{\e\to 0} J_\e(y_\e)<\infty$, since otherwise there is nothing to prove. Then, by passing to a subsequence, we may suppose that $\limsup_{\e\to 0} J_\e(y_\e)<\infty$.
By Lemma~\ref{compactness} we have that
\begin{equation}
    \label{ayeaw}
A_{y_\e,\e}\weak A\ \mbox{ in }L^{2}(\Omega; \R^{2\times 2}_{\sym}),
\end{equation}
where
$$
A=
\begin{pmatrix}
d'_1\cdot d_3 & d_2'\cdot d_3
\\
d_2'\cdot d_3 & \gamma
\end{pmatrix}
$$
with $\gamma\in L^2(\Omega)$. We note that, after setting
$$
Q(M):=\frac12\K M\cdot M,\quad M_\e:=(D^\e)^{-T}A_{y_\e,\e}(D^\e)^{-1}\sqrt{\det D^\e},
\quad M^{\circ}_\e:=(D^\e)^{-T} A^{\circ}_\eps(D^\e)^{-1}\sqrt{\det D^\e},
$$
we have that
$$
J_\e(y_\e)=\int_\Omega Q(M_\e-M^{\circ}_\e)\, dx=\int_\Omega Q(M_\e)-\K M_\e\cdot M^{\circ}_\e+Q(M^{\circ}_\e)\, dx.
$$
By \eqref{dedu}, \eqref{wconvAo}, and \eqref{ayeaw}, we have that
$$
M_\e\weak D^{-T}AD^{-1}\sqrt{\det D}=:M,\qquad M^{\circ}_\e\to D^{-T} A^{\circ}D^{-1}\sqrt{\det D}=:M^{\circ}
$$
in $L^2(\Omega;\R^{2\times2})$. Since $\det M_\e=0$ a.e.\ in $\Omega$, we may apply Proposition~\ref{lsh} with $\mathcal B=\Omega$, and obtain
\begin{eqnarray*}
\liminf_{\e\to0} J_\e(y_\e)&\ge& \int_\Omega Q(M)+\alpha_\K^+ (\det M)^+ +\alpha_\K^- (\det M)^--\K M\cdot M^{\circ}+Q(M^{\circ})\, dx\\
&=& \int_\Omega Q(M-M^{\circ})+\alpha_\K^+ (\det M)^+ +\alpha_\K^- (\det M)^-\, dx\\
&=&\int_\Omega Q(D^{-T}(A-A^{\circ})D^{-1})\det D+{\alpha_\K^+} \frac{(\det A)^+}{\det D} +{\alpha_\K^-} \frac{(\det A)^-}{\det D}\, dx
\\
&\ge&\int_I \o Q(x_1,d_1'\cdot d_3, d_2'\cdot d_3)\,dx_1,
\end{eqnarray*}
where the last inequality follows from the definition of $\overline Q$.
This proves the liminf inequality.
\end{proof}

\begin{proof}[Proof of Theorem~\ref{Gamma}--(ii)]
Let $(d_1,d_2,d_3)\in \mathcal{A}$ and let $y\in W^{2,2}(I; \R^3)$ be such that
$y'=d_1$ a.e.\ in $I$. We set
$$\mu: = d_1'\cdot d_3 = y''\cdot d_3, \quad \tau := d_2'\cdot d_3, \quad\text{and}\quad\kappa := d_1'\cdot (d_3\wedge d_1)=D_1'\cdot (e_3\wedge D_1).$$

Let $D^\alpha:=D^{-T}e_\alpha$ be the contravariant vectors in the reference configurations, i.e.,
$D^\alpha\cdot D_\beta=\delta_{\alpha\beta}$, and let
$$
M:=\mu D^1\otimes D^1+\tau(D^1\otimes D^2+D^2\otimes D^1)+\gamma D^2\otimes D^2,
$$
where $\gamma\in L^2(I)$ is chosen so that
$$
\o Q(x_1,\mu,\tau) = \big(Q(M- D^{-T}A^{\circ}D^{-1})+\alpha^+_\K{(\det M)^+}+\alpha^-_\K {(\det M)^-}\big)\det D.
$$
The fact that $\gamma$ belongs to $L^2(I)$ follows immediately by choosing
$\mu D^1\otimes D^1+\tau(D^1\otimes D^2+D^2\otimes D^1)$ as a competitor in the definition of $\o Q$
and by using the positive definiteness of $Q$.

By Proposition~\ref{lsh}, with $\mathcal B=I$, there exists $\tilde M^{\delta}\in L^2(I; \R^{2\times 2}_{\sym})$
with $\det \tilde M^{\delta} = 0$ and such that
$\tilde M^{\delta}\weak M\sqrt{\det D}$ weakly in $L^2(I;\R^{2\times2}_{\sym})$ and
\begin{align*}
\int_IQ(\tilde M^{\delta})\,dx_1&\to \int_I \left( Q(M\sqrt{\det D}) + \alpha^+_\K (\det (M\sqrt{\det D}))^+ +\alpha^-_\K (\det (M\sqrt{\det D}))^- \right)\, dx_1\\
&=\int_I \left( Q(M) + \alpha^+_\K (\det M)^+ +\alpha^-_\K (\det M)^-  \right)\det D\, dx_1.
\end{align*}
Let $M^\delta=\tilde M^\delta/\sqrt{\det D}$. Then $\det M^{\delta} = 0$ and
$M^{\delta}\weak M$ weakly in $L^2(I;\R^{2\times2}_{\sym})$ and
\begin{align}
\int_IQ(M^{\delta}- D^{-T}A^{\circ}D^{-1})\det D\,dx_1&=\int_IQ(\tilde M^{\delta})-(\K D^{-T}A^{\circ}D^{-1}\cdot M^{\delta}-Q(D^{-T}A^{\circ}D^{-1}))\det D\,dx_1\nonumber
\\
&\to J(d_1,d_2,d_3).\label{qmdlim}
\end{align}
By Lemma \ref{smoothing} with $B(t) := \chi(t,0)$, hence $B'=D_1$,
we may assume without loss of generality that there exist
$\la^{\delta}\in C^{\infty}(\o I)$ and $p^{\delta}\in C^{1}( I, \S^1)$ (same regularity of $B'$) such that
 $p^{\delta} \cdot D_1>0$ on $\o I $ and
$$
M^{\delta} = \la^{\delta} p^{\delta}\otimes p^{\delta}.
$$
We let
$r^{\delta}\in W^{1,2}(I; SO(3))$ be a frame adapted to the pair $(B,M^\delta)$, i.e.,
\begin{equation}\label{ODEdelta}
(r^{\delta})'=\begin{pmatrix}
0 & \kappa^{\delta}_M & \mu^{\delta}_M
\\
-\kappa^{\delta}_M & 0 & \tau^{\delta}_M
\\
-\mu^{\delta}_M & -\tau^{\delta}_M & 0
\end{pmatrix}\ r^{\delta}
\end{equation}
with $\kappa^{\delta}_M = D_1'\cdot (e_3\wedge D_1)$ and $\tau^{\delta}_M = M^{\delta}D_1\cdot (e_3\wedge D_1)$ and $\mu^{\delta}_M = M^{\delta}D_1\cdot D_1$. We take $r^{\delta}(0)=(d_1|d_3\wedge d_1|d_3)^T(0)$ as initial condition. Finally, we define $d_1^{\delta} := (r^{\delta})^Te_1$ and
$$
\beta^{\delta}(t) := y(0) + \int_0^t d_1^{\delta}(s)\, ds.
$$
For each $\delta > 0$,  Proposition \ref{propo} yields a neighbourhood $U^\delta$ of $B(\o I)$ and an isometry $u^{\delta} : U^\delta\to\R^3$
such that $u^{\delta}\circ B = \beta^{\delta}$ and
$$
(\D u^{\delta})\circ B =(r^{\delta})^Te_1\otimes D_1 + (r^{\delta})^Te_2\otimes (e_3\wedge D_1),
$$
and $(A_{u^{\delta}})\circ B = M^{\delta}$.

We let
$r\in W^{1,2}(I; SO(3))$ be a frame adapted to the pair $(B,M)$, i.e.,
$r$ satisfies \eqref{ode} with $\kappa, \tau,$ and $\mu$ replaced by
 $\kappa_M = D_1'\cdot (e_3\wedge D_1)$, $\tau_M = MD_1\cdot (e_3\wedge D_1)$, and $\mu_M = MD_1\cdot D_1$, respectively. Again, we take $r(0)=(d_1,d_3\wedge d_1,d_3)^T(0)$ as initial condition.
Since $M^{\delta}\weak M$ weakly in $L^2(I;\R^{2\times2}_{\sym})$ we have that $\mu^{\delta}_M\weak\mu_M$ and $\tau^{\delta}_M\weak\tau_M$ weakly in $L^2(I)$. Thus, $r^{\delta}\weak r$ weakly in $W^{1,2}(I;SO(3))$.
To identify $r$ note that $\kappa_M=\kappa$ and $\mu_M=\mu$. Also, since $D^1=-e_3\wedge D_2 /|D_1\wedge D_2|$ and $D^2=e_3\wedge D_1 /|D_1\wedge D_2|$, we have
$$
\tau_M = \frac{-\mu D_1\cdot D_2 + \tau}{|D_1\wedge D_2|}= \frac{-(d_1'\cdot d_3) (d_1\cdot d_2) + d_2'\cdot d_3}{|D_1\wedge D_2|}.
$$
To simplify this expression we write
$$
d_2=(d_2\cdot d_1) d_1+ (d_2\cdot (d_3\wedge d_1)) d_3\wedge d_1=(d_2\cdot d_1) d_1+ |d_1\wedge d_2| d_3\wedge d_1,
$$
from which we deduce that $d_2'\cdot d_3=(d_2\cdot d_1) (d_1'\cdot d_3)+ |d_1\wedge d_2| (d_3\wedge d_1)'\cdot d_3$. Hence,
$$
\tau_M=\frac{|d_1\wedge d_2| (d_3\wedge d_1)'\cdot d_3}{|D_1\wedge D_2|}=-\frac{|d_1\wedge d_2| }{|D_1\wedge D_2|} d_3'\cdot (d_3\wedge d_1)= -d_3'\cdot (d_3\wedge d_1)= d_3\cdot (d_3\wedge d_1)',
$$
where we used that
\begin{equation}\label{detdet}
|D_1\wedge D_2|^2=(D_1\cdot D_1) (D_2\cdot D_2)- (D_1\cdot D_2)^2=
(d_1\cdot d_1) (d_2\cdot d_2)- (d_1\cdot d_2)^2=|d_1\wedge d_2|^2.
\end{equation}
It is now immediate to check that $r(t)=(d_1,d_3\wedge d_1,d_3)^T(t)$.

Thus, $r^{\delta}\weak (d_1,d_3\wedge d_1,d_3)^T$ weakly in $W^{1,2}(I;SO(3))$, and as a consequence
$\beta^{\delta}\weak y$ weakly in $W^{2,2}(I;\R^3)$ and
$$
(\D u^{\delta})\circ B \weak d_1\otimes D_1+ (d_3\wedge d_1)\otimes (e_3\wedge D_1)
$$
weakly in $W^{1,2}(I;\R^{2\times 3}).$
In particular, $((\D u^{\delta})\circ B) D_1 \weak d_1$ weakly in $W^{1,2}(I;\R^{3})$ and, using \eqref{detdet},
\begin{align*}
((\D u^{\delta})\circ B) D_2 &\weak (D_1\cdot D_2) d_1+ (e_3\wedge D_1 \cdot D_2) d_3\wedge d_1=
(d_1\cdot d_2) d_1+ |d_1\wedge d_2| d_3\wedge d_1\\
& =(d_1\cdot d_2) d_1+ (d_3\wedge d_1\cdot d_2) d_3\wedge d_1=d_2
\end{align*}
weakly in $W^{1,2}(I;\R^{3})$.
Since for $\eps$ small enough $S_\eps\subset U^\delta$ we may define
$$
y^{\delta}_{\e} = u^{\delta}\circ \chi_\e.
$$
The map
$$
(s, t)\mapsto \chi(t,0)+ s(p^{\delta})^{\perp}(t)
$$
is a $C^1$ diffeomorphism and, from \eqref{propo-1} and the regularity of $r^\delta$ as a solution of \eqref{ODEdelta}, we see that $u^{\delta}$ is $C^2$. Hence, as $\eps\to 0$, we have
$y^\delta_\e\to u^\delta\circ B=\beta^\delta$ in $W^{2,2}(I;\R^3)$ and, see \eqref{dyu},
$\partial_1 y^\delta_\e\to ((\nabla u^\delta)\circ B)D_1$ and $\partial_2 y^\delta_\e/\e\to ((\nabla u^\delta)\circ B)D_2$ in $W^{1,2}(I;\R^3)$. Also
\begin{align*}
\int_{S} Q((D^\e)^{-T}(A_{y^\delta_\e,\e}-A^\circ_\eps)(D^\e)^{-1}) \det D^\e\,dx
&= 
\int_{S} Q(A_{u^\delta}\circ \chi_\e-(D^\e)^{-T}A^\circ_\e(D^\e)^{-1}) \det D^\e\,dx\\
&\to \int_I Q(A_{u^{\delta}}\circ B-(D)^{-T}A^\circ(D)^{-1}) \det D\,dx_1\\
&= \int_I Q(M^{\delta}-(D)^{-T}A^\circ(D)^{-1}) \det D\,dx_1
\end{align*}
where to obtain the first equality we used \eqref{AyAu}. Hence, by \eqref{qmdlim} it follows that
$$
\lim_{\delta\to 0}\lim_{\e\to 0}\int_{S} Q((D^\e)^{-T}(A_{y^\delta_\e,\e}-A^\circ_\eps)(D^\e)^{-1}) \det D^\e\,dx= J(d_1,d_2,d_3)
$$
and by taking a diagonal sequence we complete the proof.
\end{proof}

\bigskip

\noindent
{\bf Acknowledgements.}
PH was supported by the Deutsche Forschungsgemeinschaft.
MGM acknowledges support by GNAMPA--INdAM and by the European Research Council under Grant No.\ 290888
``Quasistatic and Dynamic Evolution Problems in Plasticity and Fracture''.

\end{document}